\documentclass[11pt, a4paper, onesided]{preprint}
\pdfoutput=1 %

\usepackage{tikz}
\usetikzlibrary{matrix, positioning}

\usepackage{nicematrix}
\usepackage{multirow}
\usepackage{multibib}
\newcites{soft}{Mathematical software}

\newcommand{\del}{\mathbin\backslash}
\newcommand{\con}{\mathbin/}

\newcommand{\Sym}{\RM{Sym}}
\newcommand{\PR}{\RM{PR}}
\newcommand{\PD}{\RM{PD}}
\newcommand{\pmm}{\RM{pr}}
\newcommand{\LPR}{\RM{LPR}}

\newcommand{\spos}{\TT+}
\newcommand{\sneg}{\TT-}
\newcommand{\SL}{\RM{SL}}
\newcommand{\SymGrp}{\FR{S}}
\newcommand{\HypGrp}{\FR{B}}
\DeclareMathOperator{\sgn}{sgn}
\DeclareMathOperator{\signs}{signs}

\DeclareMathOperator{\diag}{diag}
\DeclareMathOperator{\adj}{adj}

\newcommand\pr[1]{
  \setsepchar{:}
  \ignoreemptyitems
  \readlist*\mylist{#1}%
  \ifthenelse{\mylistlen = 2}{\mylist[2]}{}%
  [\mylist[1]]%
}

\newcommand\apr[1]{
  \setsepchar{:}
  \ignoreemptyitems
  \readlist*\mylist{#1}%
  \ifthenelse{\mylistlen = 2}{\mylist[2]}{}%
  [\mylist[1]]%
}

\title[Sign patterns of principal minors of real symmetric matrices]{%
Sign patterns of principal minors\\%
of real symmetric matrices}

\author{Tobias Boege}
\address{\mbox{Tobias Boege, Department of Mathematics, KTH Royal Institute of Technology, Sweden}}
\email{post@taboege.de}

\author{Jesse Selover}
\address{\mbox{Jesse Selover, Department of Mathematics, University of Massachusetts Amherst, Amherst, USA}}
\email{jselover@umass.edu}

\author{Maksym Zubkov}
\address{\mbox{Maksym Zubkov, Department of Mathematics, University of California, Berkeley, USA}}
\email{mzubkov@berkeley.edu}

\date{\today}

\newcommand{\MathRepoURL}{\url{https://mathrepo.mis.mpg.de/SymmetricPrincipalMinorSigns/}}

\subjclass[2020]{%
  05B20  %
  (primary)
  14P10, %
  14P25, %
  15A15  %
  (seconary)%
}

\keywords{%
  symmetric matrix,
  principal minors,
  sign pattern,
  orientation,
  Lagrangian matroid,
  connected components%
}

\begin{document}

\begin{abstract}
We analyze a combinatorial rule satisfied by the signs of principal minors of
a real symmetric matrix. The sign patterns satisfying this rule are equivalent
to uniform oriented Lagrangian matroids. We first discuss their structure and
symmetries and then study their asymptotics, proving that almost all of them
are not representable by real symmetric matrices. We offer several conjectures
and experimental results concerning representable sign patterns and the topology
of their representation spaces.
\end{abstract}

\maketitle

\section{Introduction}
\label{sec:Introduction}

Let $N$ be a finite set of size $n$ and $\Sigma \in \Sym_N \defas \Sym_N(\BB R)$
a real symmetric matrix whose rows and columns are indexed by~$N$. For any subsets
$I,J \subseteq N$ denote by \(\Sigma_{I,J}\) the submatrix whose rows are indexed
by \(I\) and columns by \(J\). A submatrix \(\Sigma_K \defas \Sigma_{K,K}\) is
called a \emph{principal submatrix}, and we write $\pr{K:\Sigma} \defas \det \Sigma_K$
for the \(K\)-th \emph{principal minor} of \(\Sigma\).

In this article we are concerned with the space $\PR_N \defas \PR_N(\BB R)$
of real symmetric matrices all of whose principal minors are non-zero.
Such a matrix is called \emph{principally regular}. The space $\PR_N$
appears in a variety of settings. We delve into these connections more in
the \nameref{sec:Background} section below. $\PR_N$ has a natural
decomposition whose pieces correspond to assignments of \emph{signs}
$\TT+$ or $\TT-$ to each of the principal minors.
Formally, a \emph{sign pattern} is a function $s: 2^N \to \Set{\TT+, \TT-}$.
If~there exists $\Sigma \in \PR_N$ such that $s(K) = \sgn \pr{K:\Sigma}$
for all $K \subseteq N$, then $s$ is called \emph{representable} and we
denote it as~$\signs_\Sigma$. By~convention $\pr{\emptyset:\Sigma} = 1$
and hence $\signs_\Sigma(\emptyset) = \TT+$. The set $\Set{\TT+, \TT-}$
is naturally a group under the usual rules for multiplication of signs.

To work with sign patterns as combinatorial objects, we adopt some notational
conveniences. Subsets of $N$ are denoted by uppercase letters $I, J, K, \dots$
and elements correspondingly in lowercase $i, j, k, \dots$. The union of
$I, J \subseteq N$ is abbreviated to $IJ \defas I \cup J$. An~element $i \in N$
is not distinguished from its singleton subset $\Set{i} \subseteq N$ and thus
an expression $iK$ is shorthand for $\Set{i} \cup K$. We use $I \oplus J$ for
the symmetric difference of $I$ and $J$ and $K^\co$ for the complement of $K$
in~$N$. The notation $2^N$ for the powerset of~$N$ already appeared above.

For any sign pattern $s: 2^N \to \Set{\TT+, \TT-}$, we set \(\PR_N(s)\)
to be the fiber of \(\signs\) over $s$. That is,
\[
  \PR_N(s) \defas \Set{\Sigma\in \PR_N : \signs_\Sigma = s}.
\]
Topologically, \(\PR_N\) decomposes into the subspaces \(\PR_N(s)\), where
$s$ ranges over all representable sign patterns; and these subspaces are
disconnected from each other inside~$\PR_N$. Our aim is to understand the
pieces $\PR_N(s)$ as a window into the topology of the whole space $\PR_N$.

Our point of departure is the following observation about the minors
of a symmetric matrix. It follows at once from the Desnanot--Jacobi
identity and the symmetry of the matrix.

\begin{lemma*}
For any $\Sigma \in \Sym_N$, $i \neq j$ and $K \subseteq N \setminus ij$,
the following polynomial identity holds:
\begin{align*}
  \tag{$\diamond'$} \label{eq:Mdiamond}
  (\det \Sigma_{iK,jK})^2 = \pr{iK:\Sigma} \cdot \pr{jK:\Sigma} - \pr{ijK:\Sigma} \cdot \pr{K:\Sigma}.
\end{align*}
Consequently $\big[\signs_\Sigma(iK) \neq \signs_\Sigma(jK)\big] \implies
\big[\signs_\Sigma(K) \neq \signs_\Sigma(ijK)\big]$.
\end{lemma*}

The equation \eqref{eq:Mdiamond} is called a ``square trinomial'' in~\cite{Geometry}.
It shows that the principal minor map $K \mapsto \pr{K:\Sigma}$ is locally
log-submodular for a symmetric matrix, a fact also known as the
\emph{Koteljanskii inequality}. %
Motivated by its implication for the signs, we make the following definition.

\begin{definition} \label{def:Admissible}
A sign pattern $s: 2^N \to \Set{\TT+,\TT-}$ is \emph{admissible} if
$s(\emptyset) = \spos$ and if the implication
\begin{align*}
  \tag{$\diamond$} \label{eq:diamond}
  \big[s(iK) \neq s(jK)\big] \implies \big[s(K) \neq s(ijK)\big]
\end{align*}
holds for every \(i \neq j\) and \(K \subseteq N \setminus ij\).
\end{definition}

There is one instance of \eqref{eq:diamond} for every disjoint pair of
subsets $ij$ and $K$ of $N$, where ${i \neq j}$. We~denote such a
pair by $(ij|K)$ and call it a \emph{diamond} because the corresponding
interval $[K, ijK]$ in the boolean lattice $2^N$ looks like a diamond.
Correspondingly, we call the implication~\eqref{eq:diamond} the \emph{diamond
axiom} for~$(ij|K)$. Diamonds are in bijection with the $2$-dimensional
faces of the $N$-dimensional hypercube where the four sets $K, iK, jK, ijK$
in $[K, ijK]$ correspond to the four vertices of the $2$-face, so there
are precisely $\binom{n}{2} 2^{n-2}$ diamond axioms over a ground set
of size~$n$.
This also explains the name ``square trinomial'' for~\eqref{eq:Mdiamond}.

\subsection*{Background}
\label{sec:Background}

The principal minors of symmetric matrices have naturally received a lot
of attention from various branches of mathematics. Basic combinatorial
properties have been derived in the form of constraints on the
\emph{characteristic sequences} in \cite{pr,epr,sepr}. These sequences
capture information about principal minors $\pr{K:\Sigma}$ grouped by
the size of~$K$ which is called the \emph{order} of the principal minor.
The~\TT{sepr}-characteristic sequence of \cite{sepr} is the closest of
these notions to our topic of principal minor sign patterns. It captures
for each $0 \le k \le n$ whether all, some or none of the order $k$
principal minors are positive, negative, zero or non-zero. Crucially,
the theory of characteristic sequences makes no assumptions about
principal regularity but contains it as a special case.

In algebraic statistics and probabilistic reasoning, principally regular
matrices are algebraic models of the covariance matrices of regular
multivariate Gaussian probability distributions. They allow an algebraic
investigation of abstract properties of Gaussian conditional independence
relations using the tools of computer algebra~\cite[Chapter~3]{TB}.
In fact, the conditional independence relation of a Gaussian is nothing
but the set of all diamonds $(ij|K)$ such that equality holds in the
Koteljanskii inequality $\pr{iK:\Sigma}\cdot \pr{jK:\Sigma} \ge \pr{ijK:\Sigma}\cdot \pr{K:\Sigma}$
(provided that~$\pr{K:\Sigma} \not= 0$).

Principally regular matrices make yet another appearance in the standard
symplectic vector space $V = \BB R^n \oplus (\BB R^n)^\dual$. Recall that
the Lagrangian Grassmannian $\RM{LGr}(V)$ parametrizes the Lagrangian
subspaces of~$V$ and that a Lagrangian subspace can be represented as
the row space of a real $n \times 2n$-matrix $M = (A \mid B)$ of full
row rank, where $A$ and $B$ are $n \times n$-matrices with $A B^\T$
symmetric.
By \cite[Sections~3.4 \& 4.1.3]{CoxeterMatroids}, the Lagrangian Grassmannian
has a stratification by representable Lagrangian matroids. We consider its
affine patch consisting of matrices of the form $(I_n \mid \Sigma)$, where
$I_n$ is the identity matrix. This patch is parametrized by $\Sigma \in \Sym_N$
and $\PR_N$~is the cell corresponding to the uniform Lagrangian matroid.
This cell is further stratified by \emph{orientations} which are in turn
indexed by our representable sign patterns.
Our investigation of admissible sign patterns as a combinatorial model
for representable sign patterns is therefore in line with matroid theory:
oriented matroids \cite{OrientedMatroids} are combinatorial models for
the signs attainable by maximal minors of a matrix; and oriented gaussoids
\cite{Geometry} model signs of almost-principal minors of a positive
definite matrix. In fact, our admissible sign patterns are equivalent
to orientations of the uniform Lagrangian matroid; cf.~\cite[Axiom~4]{B.B.G+2000}.

In its role as a moduli space of Lagrangian subspaces with certain
orientation features, the topological structure of $\PR_N(s)$ becomes
interesting. Akin to the famous Ringel isotopy problem for oriented matroids
\cite[Section~8.6]{OrientedMatroids}, one may ask if every
$\Sigma \in \PR_N(s)$ can be continuously deformed into any other
$\Sigma' \in \PR_N(s)$ without leaving~$\PR_N(s)$. It turns out that this
is, in general, impossible because $\PR_N(s)$ is disconnected.
In light of our \Cref{conj:Contractible} that each connected component
is contractible, we focus on their number of connected components~$\dim H^0(\PR_N(s))$.

Our construction of the sign pattern $\signs_\Sigma$ factors through the
\emph{principal minor map}
\[
  \pmm_N: \Sym_N \to \mathbb{R}^{2^N}
\]
which sends a real symmetric matrix $\Sigma$ to the collection $(p_K =
\pr{K:\Sigma} : K \subseteq N)$ of all of its principal minors. This is a
polynomial map and hence, by the Tarski--Seidenberg theorem, its image is
a semialgebraic set.
This set is well-studied. We mention the thesis of Oeding \cite{Oeding} as
a starting point. More recently, Ahmadieh and Vinzant \cite{A.V2022} gave
a uniform algebraic characterization of its image over all unique factorization
domains. Over the real numbers, an equivalent description was already known
in the information theory literature due to Hassibi and Shadbakht \cite{S.H2011}
following work of Holtz and Sturmfels \cite{HoltzSturmfels}.

These descriptions of the image of $\pmm_N$, however, involve a large number
of complicated equations among the principal minors and only a few very
elementary inequalities $p_{ij} \le p_i\, p_j$ for all $i \not= j$.
Although there are many interesting inequalities on the image of $\pmm_N$
with applications to optimization, information theory and combinatorics
(see e.g.~\cite{Hyperbolic,hall2008bounded,S.H2011,InfoDet}), they are difficult
to derive from this description.
One source of motivation to study representability of sign patterns is
as combinatorial shadows of such inequalities. Namely, every sign pattern
$s$ corresponds to an open orthant in the space $\BB R^{2^N}$. If $s$
is non-representable, then the image of $\pmm_N$ does not intersect its
orthant. But more is true: from the non-representability it follows that
there exist $K_1, \dots, K_t, L \subseteq N$ such that every $\Sigma \in
\PR_N$ satisfies:
\[
  \bigwedge_{i=1}^t \Big[\sgn \pr{K_i:\Sigma} = s(K_i)\Big]
  \implies \Big[\sgn \pr{L:\Sigma} = -s(L)\Big].
\]
By the Positivstellensatz \cite[Proposition~4.4.1]{RealAlgebra}, the fact
that $\sgn \pr{L:\Sigma}$ can only be $-s(L)$ under the above assumptions
on $\sgn\pr{K_i:\Sigma}$ has an algebraic certificate: it is provable from
a polynomial identity among principal minors of symmetric matrices with
sums of squares coefficients --- just like~\eqref{eq:Mdiamond}.
Under the sign assumptions on the $\pr{K_i:\Sigma}$, such an identity turns
into a polynomial inequality which semialgebraically separates $\pmm_N(\PR_N)$
from the orthant associated to~$s$ (and possibly other orthants as well).
Knowing which $s$ are non-representable is a first step towards finding
these inequalities.
\Cref{conj:minimal-forbidden-minor} presents a candidate for the smallest
admissible sign pattern which is not representable and a proof of
non-representability might translate into one of the inequalities we seek.

\subsection*{Results and outline}
In \Cref{sec:Constructions} we introduce various combinatorial operations on
sign patterns which are inspired by concepts from matroid theory. The assumption
of principal regularity furnishes a particularly rich combinatorial structure.
We show that admissible sign patterns are structurally similar to representable~ones,
in that both classes satisfy natural closure properties with respect to these
operations. Moreover, these combinatorial constructions induce continuous
maps which give insight into the topology of $\PR_N(s)$ (\Cref{thm:ActionsTop}).
\Cref{sec:PR3} uses these combinatorial gadgets to give a full account of the
representable sign patterns in $\PR_3$ and their fibers under the $\signs$ map.

It is natural to ask whether admissibility is not only necessary for but even
equivalent to representability. In \Cref{sec:Counting} we derive the asymptotic
number of admissible sign patterns by identifying them with certain 3-colorings
of the hypercube. It follows that asymptotically almost all admissible sign
patterns are non-representable (\Cref{thm:asymp-zero-prob}).
Finally, in \Cref{sec:Representability} we study representability of sign patterns
for small ground sets and collect observations about the topology of~$\PR_N(s)$.
We~compute the numbers of admissible and representable sign patterns for $n \le 5$,
giving new sequences of combinatorial interest. Generalizations and further
topics, including the computational complexity of the representability problem,
are discussed briefly in \Cref{sec:Remarks}.
All~code and data referenced throughout the paper is available on our
supplementary repository
\begin{center}
  \MathRepoURL.
\end{center}

\section{Minors, duality and symmetry}
\label{sec:Constructions}

In this section we introduce various combinatorial constructions on sign
patterns which are inspired by matroid theory and come from natural
operations on $\PR_N$. The fact that admissible sign patterns also have
these closure properties motivates their use as a combinatorial model.

\subsection{Minors and duality}

In this section we derive combinatorial analogues to the three operations
of taking a principal submatrix, taking Schur complements, and matrix
inversion on the level of sign patterns. These three operations are
intimately connected by standard results in matrix analysis. The development
of the theory in this section follows the templates of matroid and gaussoid
theory.

Let $\Sigma \in \PR_N$. Then, by definition, every principal submatrix
$\Sigma_K$ is non-singular. Consider $N$ partitioned into $K$ and $K^\co$
and $\Sigma$ as a block matrix:
\begin{align*}
  \Sigma = \begin{pNiceMatrix}[first-row, last-col]
    K    & K^\co &   \\
    A    & B     & K \\
    B^\T & C     & K^\co
  \end{pNiceMatrix} =
  \begin{pNiceMatrix}
    I & 0 \\
    B^\T A^{-1} & I
  \end{pNiceMatrix}
  \begin{pNiceMatrix}
    A & 0 \\
    0 & C - B^\T A^{-1} B
  \end{pNiceMatrix}
  \begin{pNiceMatrix}
    I & A^{-1} B \\
    0 & I
  \end{pNiceMatrix}.
\end{align*}
Since $A = \Sigma_K$ is non-singular, this is a valid block diagonalization
of $\Sigma$. The $K^\co \times K^\co$ block in the block-diagonal matrix is
the \emph{Schur complement} of $K$ in $\Sigma$ and it is denoted by
$\Sigma^K \in \Sym_{K^\co}$. This decomposition proves Schur's formula \cite{Zhang}:
\[
  \det \Sigma = \det \Sigma_K \cdot \det \Sigma^K.
\]
It follows that every Schur complement $\Sigma^K$ in a principally regular
matrix is non-singular. Inverting both sides of the matrix equation shows
that
\begin{align}
  \label{eq:Mdual}
  (\Sigma^{-1})_{K^\co} = (\Sigma^K)^{-1}
\end{align}
and hence $\Sigma^{-1} \in \PR_N$.
Thus, matrix inversion is an involution on $\PR_N$ and the sign pattern
$\signs_{\Sigma^{-1}}$ can be computed from $\signs_{\Sigma}$ using Schur's formula:
\begin{align*}
  \signs_{\Sigma^{-1}}(K) &= \sgn \det \Sigma^{K^\co} = \sgn \det \Sigma \cdot \sgn \det \Sigma_{K^\co} \\
  &= \signs_\Sigma(N) \cdot \signs_\Sigma(K^\co).
\end{align*}
This motivates the following definitions:

\begin{definition}
Let $s$ be a sign pattern on ground set $N$. The \emph{dual} of $s$ is a
sign pattern $s^\dual$ on $N$ defined by $s^\dual(K) = s(N) \cdot s(K^\co)$.
The \emph{co-value at $K$} of a sign pattern is $s^\co(K) \defas s(N) \cdot s(K) =
s^\dual(K^\co)$.
\end{definition}

Consider subsets $K \cap L = \emptyset$ of $N$. It is clearly true that
$\Sigma_{KL} \in \PR_{KL}$ with
\[
  (\Sigma_{KL})_K = \Sigma_K.
\]
Similarly, the quotient formula for Schur complements \cite[Theorem~1.4]{Zhang}
states that $(\Sigma_{KL})^K$ is a non-singular principal submatrix of $\Sigma^K$,
namely with index set $L$, and that its Schur complement is
\[
  (\Sigma^K)^L = \Sigma^{KL}.
\]
These observations prove that principal submatrices $\Sigma_K$ and Schur
complements $\Sigma^K$ of a principally regular matrix $\Sigma$ are not
only non-singular but principally regular themselves. Moreover,
$\signs_{\Sigma_K}$ and $\signs_{\Sigma^K}$ can be computed from $\signs_{\Sigma}$.
For $\signs_{\Sigma_K}$ this is just a restriction of the function $\signs_{\Sigma}$
to the subsets of~$K$. For $\signs_{\Sigma^K}$ we use the quotient formula to obtain the
co-value $\signs_{\Sigma^K}^\co(L) = \signs_{\Sigma}^\co(KL)$.
The co-values of a sign pattern $s$ of course completely determine it
since $s^\co(L) s^\co(\emptyset) = s(L)$.

The matrix-theoretic operations of taking principal submatrices and Schur
complements furnish two combinatorial operations on sign patterns which
take a sign pattern $s$ on $N$ and produce another one on a subset
$K \subseteq N$. The patterns which arise in this way are ``minors'' or
``natural subconfigurations'' of $s$ --- one is restricting $s$ to $K$,
the other projects away from~$K$.

\begin{definition} \label{def:Minors}
Let $s$ be a sign pattern on $N$ and $K \subseteq N$. The \emph{restriction
of $s$ to $K$} is a sign pattern $s|_K$ on $K$ with values $(s|_K)(L) = s(L)$
for $L \subseteq K$. The~\emph{deletion of $K$} is $s \del K \defas s|_{K^\co}$.
The~\emph{contraction of $s$ at $K$} is the sign pattern $s \con K$ on $K^\co$
given by $(s \con K)^\co(L) = s^\co(KL)$ for $L \subseteq K^\co$. Any~sign pattern
arising from $s$ by a sequence of deletions and contractions is a \emph{minor}
of~$s$. A minor on ground set $K$ of size $k$ is called a \emph{$k$-minor}.
\end{definition}

As in matroid theory, restriction and contraction are dual to each other.
On the level of matrices, this is precisely captured by the formula
\eqref{eq:Mdual}.

\begin{lemma} \label{lemma:DelCon}
For any sign pattern $s$ on $N$ and $K \cap L = \emptyset$, we have
$(s^\dual \del K)^\dual = s \con K$.
\end{lemma}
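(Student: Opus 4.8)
The plan is to prove the identity by a direct computation. Both sides are sign patterns on the ground set $K^\co$ (note that $L \subseteq K^\co$ automatically satisfies $K \cap L = \emptyset$), so it suffices to unfold the definitions of $(-)^\dual$, $\del$ and $\con$ and to check that the two patterns agree on every subset $L \subseteq K^\co$. The one subtlety to keep in mind is that the outer dual on the left is formed over the reduced ground set $K^\co$, not over $N$, so the complement it involves is complementation inside $K^\co$, whereas the dual defining $s^\dual$ uses complementation inside $N$.

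First I would handle the left-hand side. Since deletion is just restriction, $(s^\dual \del K)(L) = s^\dual(L) = s(N)\,s(L^\co)$ for every $L \subseteq K^\co$, where $L^\co$ is the complement in $N$. Taking the dual over $K^\co$ then gives
\[
  (s^\dual \del K)^\dual(L) = (s^\dual\del K)(K^\co)\cdot(s^\dual\del K)(K^\co\setminus L).
\]
Evaluating the two factors — using that the complement in $N$ of $K^\co$ is $K$ and the complement in $N$ of $K^\co\setminus L$ is $KL$ (here the disjointness $K \cap L = \emptyset$ is what makes this clean) — turns the right-hand side into $s(N)\,s(K)\cdot s(N)\,s(KL)$, which simplifies to $s(K)\,s(KL)$ because $s(N)^2 = \spos$.

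Next I would handle the right-hand side, which is given through its co-values: $(s\con K)^\co(L) = s^\co(KL)$. Since a sign pattern is recovered from its co-values via $t(L) = t^\co(L)\,t^\co(\emptyset)$, and here $(s\con K)^\co(\emptyset) = s^\co(K)$, I get $(s\con K)(L) = s^\co(KL)\,s^\co(K) = s(N)s(KL)\cdot s(N)s(K) = s(K)\,s(KL)$, matching the left-hand side.

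There is no genuine obstacle here: no case analysis and no inequalities, just careful bookkeeping. The only place an error could creep in is conflating the two ground sets over which the two duals are taken, so I would track exactly where each complement is formed. As a sanity check one can observe that for representable $s = \signs_\Sigma$ the lemma is precisely the image under $\signs$ of the matrix identity~\eqref{eq:Mdual}: restriction, Schur complement and inversion realize deletion, contraction and duality, so $(\Sigma^{-1})_{K^\co} = (\Sigma^K)^{-1}$ unwinds to $(s^\dual\del K)^\dual = s\con K$. Since the lemma is stated for arbitrary sign patterns, however, the elementary computation above is what I would actually record.
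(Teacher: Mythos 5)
Your computation is correct and is essentially identical to the paper's proof: both sides are reduced to $s(K)\cdot s(KL)$ by unfolding the definitions of co-value, dual, deletion and contraction, with the same care taken about whether complements are formed in $N$ or in $K^\co$. The closing remark relating the identity to \eqref{eq:Mdual} for representable $s$ is a nice sanity check but, as you note, not a substitute for the direct argument since the lemma holds for arbitrary sign patterns.
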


\begin{proof}
It follows from the definitions of contraction and co-value that
$(s \con K)(L) = (s \con K)^\co(\emptyset) \cdot (s \con K)^\co(L) =
s^\co(K) \cdot s^\co(KL) = s(K) \cdot s(KL)$ for any $L \subseteq K^\co$.
On~the~other hand,
\begin{align*}
  (s^\dual \del K)^\dual(L) &= s^\dual(K^\co) \cdot s^\dual(K^\co \setminus L)
  = s^\dual(K^\co) \cdot s^\dual((KL)^\co) \\
  &= s(K) \cdot s(KL).
  \qedhere
\end{align*}
\end{proof}

The definitions above were made such that
\begin{align*}
  \signs_\Sigma^\dual &= \signs_{\Sigma^{-1}}, \\
  (\signs_\Sigma \del K^\co) = (\signs_\Sigma|_K) &= \signs_{\Sigma_K}, \\
  (\signs_\Sigma \con K) &= \signs_{\Sigma^K}.
\end{align*}
Hence, minors and duals of representable sign patterns are again representable.
We say that the class of representable sign patterns is \emph{closed} under
minors and duality.

\begin{lemma} \label{lemma:MinorClosed}
Admissible sign patterns are closed under minors and duality.
\end{lemma}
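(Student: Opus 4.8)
The plan is to prove closure under duality and closure under minors separately, and for minors it suffices (by \Cref{lemma:DelCon} and the already-established closure under duality) to handle only deletion, since contraction is conjugation of a deletion by the dual.

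\emph{Duality.} Let $s$ be admissible with dual $s^\dual(K) = s(N)\cdot s(K^\co)$. Fix a diamond $(ij|K)$ with $K \subseteq N \setminus ij$. I would translate the hypothesis $s^\dual(iK) \neq s^\dual(jK)$ into a statement about $s$: since $s^\dual(iK) = s(N)s((iK)^\co)$ and $s^\dual(jK) = s(N)s((jK)^\co)$, the common factor $s(N)$ cancels and the hypothesis becomes $s((iK)^\co) \neq s((jK)^\co)$. Now observe that $(iK)^\co = j L$ and $(jK)^\co = i L$ where $L \defas (ijK)^\co = N \setminus ijK$, which is disjoint from $ij$. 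So the hypothesis reads $s(jL) \neq s(iL)$, and the diamond axiom \eqref{eq:diamond} applied to $(ij|L)$ yields $s(L) \neq s(ijL)$. Finally $L = (ijK)^\co$ and $ijL = K^\co$, and multiplying both by $s(N)$ gives $s^\dual(ijK) \neq s^\dual(K)$, which is the desired conclusion. (One also checks $s^\dual(\emptyset) = s(N)\cdot s(N) = \spos$.)

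\emph{Deletion.} Let $s$ be admissible on $N$ and let $K \subseteq N$; write $M = K^\co$ and $t = s\del K = s|_M$, a sign pattern on $M$. The value $t(\emptyset) = s(\emptyset) = \spos$. For a diamond $(ij|L)$ with $i,j \in M$ and $L \subseteq M \setminus ij$, all four of $L, iL, jL, ijL$ are subsets of $M$, so $t$ agrees with $s$ on each of them; hence the diamond axiom for $(ij|L)$ holding for $s$ immediately gives it for $t$. Thus $t$ is admissible. Combining: for contraction, $s\con K = (s^\dual \del K)^\dual$ is admissible because $s^\dual$ is admissible (duality case), then $s^\dual\del K$ is admissible (deletion case), then its dual is admissible (duality case again). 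A general minor is a composition of deletions and contractions, so it is admissible by induction.

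There is no serious obstacle here; the only thing to get right is the index bookkeeping in the duality step — tracking how complementation swaps the roles of $K$ and $(ijK)^\co$ while fixing the pair $ij$, so that a diamond for $s^\dual$ at $(ij|K)$ corresponds exactly to a diamond for $s$ at $(ij|(ijK)^\co)$. The deletion step is essentially immediate since the diamond axiom is a local condition on a $2$-face of the cube and deletion just restricts attention to a subcube.
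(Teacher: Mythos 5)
Your proof is correct and follows the same strategy as the paper: reduce to deletion and duality via \Cref{lemma:DelCon}, observe that deletion trivially preserves the diamond axioms on a subcube, and show that a diamond axiom for $s^\dual$ at $(ij|K)$ is equivalent to the diamond axiom for $s$ at $(ij|(ijK)^\co)$. The paper abbreviates this last step by naming $(ij|(ijK)^\co)$ the \emph{dual diamond} of $(ij|K)$, but your explicit index bookkeeping is the same calculation.
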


\begin{proof}
By \Cref{lemma:DelCon} it is enough to show closedness under restriction
and duality. If $s$ is admissible, then $s|_K$ evaluates to $s(\emptyset)
= \spos$ on $\emptyset$ and satisfies all the diamond axioms for subsets
of~$K$ already, proving that it is admissible. The dual $s^\dual$ evaluates
to $s(N)^2 = \spos$ on $\emptyset$.

Given a diamond $(ij|K)$ over $N$,
consider its \emph{dual} diamond $(ij|K)^\dual \defas (ij|N \setminus ijK)$.
By definition of $s^\dual$, the sign pattern $s^\dual$ satisfies the axiom
corresponding to $(ij|K)$ if and only if $s$ satisfies $(ij|K)^\dual$.
Since $s$ satisfies all diamond axioms, and hence all dual diamond axioms,
$s^\dual$ satisfies all diamond axioms and thus is admissible.
\end{proof}

\subsection{Negation and the hyperoctahedral symmetry}

We now turn to continuous symmetries of symmetric matrices and the discrete
symmetries they induce on sign patterns. Let $r$ be a non-zero real number.
Then $\PR_N$ is preserved under multiplication with $r$ and $\signs_{r\Sigma}(K)
= \sgn(r)^{|K|} \signs_{\Sigma}(K)$. Thus, only the sign of $r$ matters and this
gives an action of the group $\Set{\TT+,\TT-}$ (isomorphic to $\BB Z/2$)
on sign patterns.

\begin{definition} \label{def:Negation}
The \emph{negative} $-s$ of $s$ is given by $(-s)(K) \defas (-1)^{|K|} s(K)$.
\end{definition}

The hyperoctahedral group $\HypGrp_N$ is the Weyl group of type $B_n$ and
the symmetry group of the hypercube $C_N = [-1,1]^N$. There is an action of
\(\HypGrp_N\) on \(\PR_N\) which induces an action on principal minor vectors
and thus on sign patterns. The action is well-studied, but we describe it here
for convenience in analyzing its effect on sign patterns.

As an abstract group, $\HypGrp_N$ is the semidirect product $(\BB Z/2)^N
\rtimes \SymGrp_N$ of the group of \emph{swaps} $(\BB Z/2)^N$ and the group
of \emph{permutations}~$\SymGrp_N$. Every element in $\HypGrp_N$ can be
written as a product of a swap and a permutation. We proceed to explain
these actions on matrices and sign vectors.
The symmetric group $\SymGrp_N$ acts via orthogonal coordinate changes,
permuting rows and columns of a symmetric matrix. This induces the
corresponding permutation on principal minors and no additional sign
changes.

\begin{definition}
Let $s$ be a sign pattern on ground set $N$ and $\pi \in \SymGrp_N$.
The \emph{permuted sign pattern} $\pi \cdot s$ is defined via $(\pi \cdot s)(K)
= s(\pi(K))$. We say two sign patterns which are related by a permutation are
\emph{isomorphic}.
\end{definition}

The group of swaps is generated by reflections over coordinate hyperplanes
which leave the hypercube $C_N$ invariant. We identify elements $(z_i : i \in N)
\in (\BB Z/2)^N$ with subsets $Z \subseteq N$ via $Z = \Set{i \in N : z_i = 1}$.
To describe the action on matrices, let $Z \subseteq N$ be given and define two
diagonal $N \times N$ matrices:
\[
  A_{ii} = \begin{cases}
    1, & i \not\in Z, \\
    0, & i \in Z,
  \end{cases} \quad
  B_{ii} = \begin{cases}
    0, & i \not\in Z, \\
    -1, & i \in Z.
  \end{cases}
\]
Then the image of $\Sigma$ under the swap with $Z$ is $Z \cdot \Sigma \defas
(A - \Sigma B)^{-1}(B + \Sigma A)$. It was shown in \cite[Lemma~13]{HoltzSturmfels}
(in much greater generality) that this yields another symmetric matrix.
The principal minors of $Z \cdot \Sigma$ are computed in
\cite[Proposition~3.16]{TB} as follows:
\begin{align}
  \label{eq:HypGrpMatrix}
  \pr{K:(Z \cdot \Sigma)} = (-1)^{|Z \cap K|} \cdot \pr{Z:\Sigma}^{-1} \cdot \pr{Z \oplus K:\Sigma}.
\end{align}

\begin{remark}
This action of $\HypGrp_N$ is obtained as a quotient from a discrete subgroup
of an $\SL_2(\BB R)^N$ action on the Lagrangian Grassmannian. For more
details, see \cite[Section~3]{Geometry} and \cite[Section~3.3]{TB}.
\end{remark}

\begin{definition}
Let $s$ be a sign pattern on ground set $N$ and $Z \subseteq N$.
The \emph{swapping of $Z$} results in another sign pattern $Z \cdot s$
defined by $(Z \cdot s)(K) \defas (-1)^{|Z \cap K|} \cdot s(Z) \cdot s(Z \oplus K)$.
\end{definition}

\begin{example} \label{ex:Swap}
It is helpful to write out explicitly what it means to swap a single element~$i$:
\begin{equation}
  \label{eq:flip-action}
  (i \cdot s)(K) = \begin{cases}
    -s(i) \cdot s(K \setminus i), & i \in K , \\
     s(i) \cdot s(iK), & i \not\in K.
  \end{cases}
\end{equation}
\end{example}

\begin{remark}
One can easily check the equation
\begin{equation}
\label{eq:three-actions}
-s^\dual = N \cdot s
\end{equation}
relating the three different group actions we have defined on sign patterns.
\end{remark}

With these definitions, it is clear that the representable sign patterns are
closed under negation and the action of the hyperoctahedral group. We now show
that admissible sign patterns are closed under these symmetries as well.

\begin{lemma}
Admissible sign patterns are closed under negation and the action of $\HypGrp_N$.
\end{lemma}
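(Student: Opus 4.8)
The plan is to verify closure under each of the two group actions separately, since every element of $\HypGrp_N$ is a product of a swap and a permutation, and negation is a third independent $\BB Z/2$-action. In each case the strategy is the same: the group action permutes the set of all diamonds among themselves (possibly twisting the axiom in a controlled way), and so if $s$ satisfies \emph{all} diamond axioms, then so does the transformed sign pattern. The value at $\emptyset$ is also easy to check in each case: for $\pi \cdot s$ it is $s(\pi(\emptyset)) = s(\emptyset) = \spos$; for $-s$ it is $(-1)^0 s(\emptyset) = \spos$; and for $Z \cdot s$ it is $(-1)^{|Z \cap \emptyset|} s(Z) s(Z \oplus \emptyset) = s(Z)^2 = \spos$.

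For permutations this is essentially immediate: the diamond $(ij|K)$ is sent by $\pi$ to the diamond $(\pi(i)\pi(j) \mid \pi(K))$, and $\pi \cdot s$ satisfies the axiom for $(ij|K)$ precisely because $s$ satisfies the axiom for $(\pi(i)\pi(j)\mid\pi(K))$ — both axioms are the literal statement $[s(\pi(iK)) \ne s(\pi(jK))] \implies [s(\pi(K)) \ne s(\pi(ijK))]$ after unwinding the definition $(\pi \cdot s)(L) = s(\pi(L))$. For negation, observe that multiplying $s$ by the factor $(-1)^{|{\cdot}|}$ changes the sign of $s$ on the four vertices $K, iK, jK, ijK$ of a diamond by $(-1)^{|K|}, (-1)^{|K|+1}, (-1)^{|K|+1}, (-1)^{|K|+2}$ respectively; so the parity pattern is preserved up to a global flip on the ``odd'' layer, which means the two equalities/inequalities appearing in \eqref{eq:diamond} are either both preserved or both reversed — in particular $[(-s)(iK) \ne (-s)(jK)] \iff [s(iK) \ne s(jK)]$ and likewise for the other pair, so the implication carries over verbatim. (Alternatively, one can cite \eqref{eq:three-actions} together with \Cref{lemma:MinorClosed} and the swap case, reducing negation to duality plus a full swap $N \cdot s$.)

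The main work is the swap action, and I would reduce it to the single-element swaps $i \cdot s$ of \Cref{ex:Swap}, since the swaps generate $(\BB Z/2)^N$ and closure is preserved under composition. Fix $i \in N$ and a diamond $(ab \mid K)$ with $a,b,K$ disjoint from each other; I want to show $i \cdot s$ satisfies its axiom. There are two cases depending on whether $i \in \{a,b\} \cup K$ or not. If $i \notin \{a,b\} \cup K$, then all four sets $K, aK, bK, abK$ avoid $i$, so $(i \cdot s)(L) = s(i) s(iL)$ on all of them; the constant factor $s(i)$ cancels in each comparison, and the axiom for $i \cdot s$ at $(ab|K)$ becomes the axiom for $s$ at $(ab \mid iK)$. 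If $i = a$ (the case $i = b$ being symmetric), then on $K$ and $bK$ we use the ``$i \notin$'' branch and on $aK = iK$ and $abK = ibK$ we use the ``$i \in$'' branch; tracking the signs shows the axiom for $i \cdot s$ at $(ab|K)$ is equivalent to the axiom for $s$ at the diamond $(ab \mid bK)$ — wait, more carefully: comparing $(i\cdot s)(aK)$ with $(i\cdot s)(bK)$ relates to comparing $s(K)$ with $s(iK)$-type quantities, and one finds it matches an axiom of $s$ on a diamond obtained from $(ab|K)$ by the symmetric-difference twist $L \mapsto i \oplus L$; the key point is simply that $L \mapsto i \oplus L$ is a bijection on the vertices of this $2$-face sending it to another $2$-face of the cube, i.e.\ another diamond. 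Finally the case $i \in K$ is handled the same way, with all four vertices containing $i$ and the ``$i \in$'' branch applied uniformly, giving the axiom of $s$ at $(ab \mid K \setminus i)$ up to sign bookkeeping. The one thing to be careful about — and the only place a sign could in principle go wrong — is that the $(-1)^{|Z \cap K|}$ factors and the $s(Z)$ prefactors must cancel consistently across the two pairs $\{iK, jK\}$ and $\{K, ijK\}$ appearing in \eqref{eq:diamond}; a short parity check confirms the factor introduced on the layer $\{iK, jK\}$ is the same on both elements and likewise for $\{K, ijK\}$, so each of the two "$\ne$" conditions is individually preserved and the implication goes through. I expect this sign bookkeeping for $i \cdot s$ to be the only genuine obstacle; everything else is formal.
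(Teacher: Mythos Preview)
Your overall strategy matches the paper's proof: verify $s(\emptyset)=\spos$ is preserved, then check that each group action permutes (or mildly twists) the collection of diamond axioms. The treatment of permutations and negation is fine and agrees with the paper. The reduction of swaps to single-element swaps is also sound.

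The gap is in your case $i=a$. Your final ``parity check'' asserts that the sign prefactors are the same on both elements of each pair $\{aK,bK\}$ and $\{K,abK\}$, so that each ``$\ne$'' condition is individually preserved. This is false. With $i=a$ one has
\[
(a\cdot s)(aK)=-s(a)\,s(K),\quad (a\cdot s)(bK)=s(a)\,s(abK),\quad (a\cdot s)(K)=s(a)\,s(aK),\quad (a\cdot s)(abK)=-s(a)\,s(bK),
\]
so on the pair $\{aK,bK\}$ the prefactors are $-s(a)$ and $+s(a)$, not equal. Consequently $(a\cdot s)(aK)\ne(a\cdot s)(bK)$ is equivalent to $s(K)=s(abK)$, and $(a\cdot s)(K)\ne(a\cdot s)(abK)$ is equivalent to $s(aK)=s(bK)$. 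The diamond axiom for $a\cdot s$ at $(ab\mid K)$ thus reads $[s(K)=s(abK)]\Rightarrow[s(aK)=s(bK)]$, which is the \emph{contrapositive} of the diamond axiom for $s$ at the \emph{same} diamond $(ab\mid K)$ --- not the axiom at some other diamond obtained by a symmetric-difference twist. (The map $L\mapsto a\oplus L$ sends this $2$-face to itself, swapping the two diagonal pairs, so your ``another $2$-face'' heuristic does not apply here.) This is exactly the computation the paper carries out; once you see it is the contrapositive, admissibility of $s$ immediately gives what you need. Your cases $i\notin abK$ and $i\in K$ are handled correctly.
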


\begin{proof}
The property $s(\emptyset) = \spos$ remains unchanged under both actions.
The diamond axiom is preserved under negation since the products $s(K) \cdot s(ijK)$
and $s(iK) \cdot s(jK)$ remain unchanged. A~permutation element of $\HypGrp_N$
permutes the diamonds on ground set~$N$ and therefore preserves the diamond
axioms.
All that remains is to check the action of reflections in \(\HypGrp_N\). Fix an element $z$ to swap and consider a diamond $(ij|K)$.
If $z \not\in ij$, then either all or none of the four sets of the diamond
contain~$z$. This reduces the axiom for $(ij|K)$ on $z \cdot s$ to the axiom
for $(ij|z \oplus K)$ on $s$ which holds by assumption. If $z \in ij$, then
we may assume without loss of generality that $i = z$ and compute
\begin{align*}
  (i \cdot s)(iK) \cdot (i \cdot s)(jK) &= -s(i) s(K) \cdot s(i) s(ijK) = -s(K) \cdot s(ijK), \\
  (i \cdot s)(K) \cdot (i \cdot s)(ijK) &= s(i) s(iK) \cdot -s(i) s(jK) = -s(iK) \cdot s(jK).
\end{align*}
The diamond axiom states that if the former product is negative, then the
latter product must also be negative. This translates to $s(K) = s(ijK)
\implies s(iK) = s(jK)$ which is just the contrapositive of the diamond axiom
for $(ij|K)$ as stated in \Cref{def:Admissible}.
\end{proof}

\begin{lemma} \label{lemma:PositiveHypGrp}
Every admissible sign pattern has a $\HypGrp_N$-equivalent sign pattern
in which all singletons are positively oriented.
\end{lemma}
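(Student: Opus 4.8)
The plan is to reduce to the single-element swaps of \Cref{ex:Swap} and argue that repeatedly swapping a negatively oriented singleton makes monotone progress. Assume $s$ is admissible and that some singleton is negatively oriented, say $s(i) = \sneg$. I would apply the swap of $i$ and track the effect on the singleton values using the explicit formula~\eqref{eq:flip-action}. The whole argument is driven by one observation: swapping a bad singleton never creates a new bad singleton.

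For the computation, first note that $(i \cdot s)(i) = -s(i)\cdot s(\emptyset) = -s(i) = \spos$, so the swap repairs the singleton $i$. Next, for any $j \neq i$, formula~\eqref{eq:flip-action} gives $(i \cdot s)(j) = s(i)\cdot s(ij)$. If $s(j) = \spos$, then $s(i) = \sneg \neq \spos = s(j)$, so the diamond axiom for the diamond $(ij \mid \emptyset)$ applies: its hypothesis $s(i) \neq s(j)$ holds, hence $s(\emptyset) \neq s(ij)$, i.e.\ $s(ij) = \sneg$. Therefore $(i \cdot s)(j) = s(i)\cdot s(ij) = \sneg\cdot\sneg = \spos$. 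Thus swapping $i$ turns $s(i)$ positive while keeping every already positively oriented singleton positive, so the set of negatively oriented singletons strictly shrinks, losing at least the element $i$.

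Since, as shown above, admissible sign patterns are closed under the action of $\HypGrp_N$, every pattern obtained along the way is again admissible, so the diamond axiom remains available and the step may be iterated. After at most $n = |N|$ swaps we arrive at an admissible sign pattern all of whose singletons are positively oriented. It is $\HypGrp_N$-equivalent to $s$ because it was produced from $s$ by composing finitely many swaps, and these all lie in the swap subgroup $(\BB Z/2)^N \le \HypGrp_N$; alternatively, $\HypGrp_N$-equivalence is an equivalence relation and each step stays within one class. The only point that genuinely needs care is the invocation of the diamond axiom at each step --- it is precisely what forbids a swap from introducing a fresh negative singleton --- and this in turn rests on the already established fact that the intermediate patterns stay admissible.
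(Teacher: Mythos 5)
Your proof is correct, and it takes a genuinely different --- and in fact simpler --- route than the paper's. The paper proves a stronger intermediate claim by induction: it maintains that the restriction of the sign pattern to a growing initial segment $\{1,\ldots,k-1\}$ is entirely positive (all principal minors, not just singletons), and shows one can extend this segment by one element per swap. That requires chasing the diamond axiom through all subsets $A \subseteq \{1,\ldots,k-1\}$ to control $s(kA)$. You instead isolate exactly what is needed for the lemma: the count of negatively oriented singletons strictly decreases under a swap. Your one-line use of the diamond axiom $(ij \mid \emptyset)$ --- showing that if $s(i) = \sneg$ and $s(j) = \spos$ then $s(ij) = \sneg$, hence $(i\cdot s)(j) = s(i)\cdot s(ij) = \spos$ --- is all that is required, and the rest is a finite descent argument. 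Your argument is shorter and more self-contained; the paper's argument yields extra structure (positive-definiteness of the restriction can be pushed along) that is not actually used elsewhere in the paper. Both are valid; yours is arguably preferable for proving the lemma as stated.
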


\begin{proof}
In fact, we will prove a slightly stronger statement. Call a sign pattern
$s$ on $N$ \emph{positive definite} if $s(I) = \TT+$ for all $I \subseteq N$.
We prove:
\begin{itemize}[label=, leftmargin=3em, rightmargin=3em]
\item
If \(s\) is an admissible sign pattern on \(N = \Set{1, \ldots, n}\)
whose restriction to \(K = \Set{1, \ldots, k-1}\) is positive definite,
then all singletons in \(s\) are positively oriented or \(s\) is
$\HypGrp_N$-equivalent to a sign pattern whose restriction to
\(\Set{1, \ldots, k}\) is positive definite.
\end{itemize}
The premise of this statement for $k=1$ holds for every sign pattern.
Iterated application of it then proves the \namecref{lemma:PositiveHypGrp}.

Now we prove this stronger statement. We may freely combine permutations and
swaps to modify a given sign pattern until the above property is established.
By the hypothesis, $s|_K$ is positive definite for $K = \Set{1, \dots, k-1}$.
Assume that there is a singleton \(j\) which is not positively oriented,
so \(s(j) = \sneg\). In this case \(j\) must be at least \(k\) and after
a transposition we may assume that~$j = k$ without disturbing the positive
definiteness of $s|_K$. Based on positive definiteness and the fact
$s(k) = \TT-$, iterated application of the diamond axiom shows that
$s(kA) = \TT-$. Swapping $k$ then yields an admissible sign pattern
$k \cdot s$ such that
\begin{align*}
  (k \cdot s)(A) &= s(k) s(kA) = -s(kA) = \spos, \\
  (k \cdot s)(kA) &= -s(k) s(A) = s(A) = \spos,
\end{align*}
for all $A \subseteq K$. Therefore, \((k \cdot s)(A) = \spos\) for all
\(A \subseteq kK\).
\end{proof}

Many more combinatorial operations inspired by matrix- and matroid-theoretic
constructions can be introduced and studied; see for example~\cite[Chapter~7]{WhiteMatroids}.
This is an open-ended task and we prefer to close the section at this point
with a concrete open question. For context, recall that representable and
admissible sign patterns are minor-closed. Moreover, a sign pattern on
ground set $N$ of size $n \ge 2$ is admissible if and only if all of its
$2$-minors are admissible (on their repective $2$-element ground set).
This is because of minor-closedness (\Cref{lemma:MinorClosed}) in one
direction and in the other direction because the diamond axiom of $(ij|K)$
holds for $s$ if and only if $(ij|\emptyset)$ holds for $s \con K$
which means that $(s \con K)|_{ij}$ is admissible on ground set~$ij$. Since
the class of admissible sign patterns is minor-closed and can be completely
characterized by its $k$-minors for a finite $k$ (in this case $k=2$),
we say that it possesses a \emph{finite~forbidden~minor~description}.

\begin{question}
Does the class of representable sign patterns have a finite forbidden minor
description?
\end{question}

This question has been studied for various types of combinatorial objects.
See \cite{MSOL1,MSOL2} for a much stronger result in matroid theory,
\cite{MatusMinors} for generalizations of matroids in the context of discrete
conditional independence and \cite[Section~4.5 \& Theorem~6.13]{TB} for
Gaussians and orientable gaussoids.
In view of the arguments of \Cref{sec:Counting} (\Cref{thm:asymp-zero-prob}), there are admissible sign patterns which
are non-representable, and thus there are admissible forbidden minors for representability. In \Cref{sec:Representability} we offer \Cref{conj:minimal-forbidden-minor} as to the
unique smallest such minor, on a ground set of size \(5\).

\subsection{Actions on representations and topology}
\label{sec:ActionRep}

The symmetries from the previous section are all induced by continuous
actions on symmetric matrices. The fact that they act on their sign
patterns means that if $s$ and $s'$ are equivalent sign patterns under
any of these group actions, then $\PR_N(s)$ and $\PR_N(s')$ are homeomorphic
and in particular they have the same numbers of connected components.
For minors, we obtain a monotonicity relation.

\begin{theorem} \label{thm:ActionsTop}
Let $s$ be a representable sign pattern over $N$. If $s'$ over $N$ arises
from $s$ by negation, duality or the $\HypGrp_N$ action, then $\PR_N(s)$
is homeomorphic to $\PR_N(s')$. If $s'$ is a minor of $s$ over $K$, then
$\dim H^0(\PR_N(s)) \ge \dim H^0(\PR_K(s'))$.
\end{theorem}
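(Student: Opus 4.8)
\emph{Overview.} The plan is to prove the two assertions separately. The homeomorphism statement follows because each of the listed symmetries is induced by a self\mbox{-}homeomorphism of $\PR_N$; the minor inequality follows by reducing to one elementary minor step and producing a continuous surjection between the relevant fibers.

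\emph{Homeomorphisms.} Negation is induced by $\Sigma\mapsto-\Sigma$, a linear automorphism of $\Sym_N$ preserving $\PR_N$ which by $\signs_{r\Sigma}(K)=\sgn(r)^{|K|}\signs_\Sigma(K)$ at $r=-1$ sends $\PR_N(s)$ onto $\PR_N(-s)$. Duality is induced by $\Sigma\mapsto\Sigma^{-1}$, which is continuous wherever $\det\Sigma\neq 0$, in particular on $\PR_N$, maps $\PR_N$ into itself by \eqref{eq:Mdual} and Schur's formula, is an involution, and sends $\PR_N(s)$ onto $\PR_N(s^\dual)$ since $\signs_{\Sigma^{-1}}=\signs_\Sigma^\dual$. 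Finally $\HypGrp_N$ acts by the orthogonal congruences $\Sigma\mapsto P\Sigma P^{\T}$, which are linear homeomorphisms, together with the swap maps $\Sigma\mapsto Z\cdot\Sigma=(A-\Sigma B)^{-1}(B+\Sigma A)$, whose denominator $\det(A-\Sigma B)$ equals $\pr{Z:\Sigma}$ up to sign and hence is nonvanishing on $\PR_N$; so each swap is continuous on $\PR_N$, is an involution, maps $\PR_N$ into itself, and sends $\PR_N(s)$ onto $\PR_N(Z\cdot s)$ by \eqref{eq:HypGrpMatrix}. Composing these homeomorphisms handles any $s'$ in the orbit of $s$, and homeomorphic spaces have equal $H^0$.

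\emph{Reduction of the minor inequality.} A minor is a finite composition of deletions and contractions, each taking a representable pattern to a representable one, so by transitivity of $\ge$ it suffices to treat one deletion and one contraction. A contraction reduces to a deletion: by \Cref{lemma:DelCon}, $s\con K_0=(s^\dual\del K_0)^\dual$, so invoking the homeomorphism part on the ground sets $N$ and $K_0^{\co}$ turns the inequality for $(s,s\con K_0)$ into the one for $(s^\dual,s^\dual\del K_0)$; and a deletion of $K_0$ is an iterated deletion of single elements. So it remains to show, for $i\in N$ and $K\defas N\setminus i$, that $\dim H^0(\PR_N(s))\ge\dim H^0(\PR_K(s|_K))$. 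Here I would use the restriction map $\rho\colon\PR_N(s)\to\PR_K(s|_K)$, $\Sigma\mapsto\Sigma_K$, which is well defined since $\Sigma_K\in\PR_K$ with $\signs_{\Sigma_K}=\signs_\Sigma|_K=s|_K$, and continuous, being the restriction of a linear projection. As all spaces in sight are open semialgebraic sets, they have finitely many connected components and $\dim H^0$ counts them; hence it is enough to prove that $\rho$ is surjective, for then it induces a surjection of finite sets $\pi_0(\PR_N(s))\to\pi_0(\PR_K(s|_K))$, whence the stated inequality of cardinalities.

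\emph{The main obstacle.} Surjectivity of $\rho$ is the extension problem for symmetric matrices, which I expect to be the only nontrivial point: given $M\in\PR_K(s|_K)$, one must find $\Sigma\in\PR_N$ with $\Sigma_K=M$ and $\signs_\Sigma=s$. Writing $\Sigma$ for the extension of $M$ by a new off\mbox{-}diagonal vector $v\in\BB R^K$ and a new diagonal entry $d$, cofactor expansion along the last row and column gives $\pr{iA:\Sigma}=d\,\pr{A:M}-v_A^{\T}\adj(M_A)v_A$ for every $A\subseteq K$, where $v_A$ is the subvector of $v$ on the indices in $A$; equivalently, by Schur complementation at $i$, the rank\mbox{-}one update $M-vv^{\T}/d$ must realize the sign pattern $s\con i$. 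So one needs $d$ with $\sgn d=s(i)$ and a vector $v$ placing all of these quantities into their prescribed sign orthant simultaneously. I would establish this by induction on $|K|$: admissibility of $s$ makes the sign requirements mutually consistent — the diamond axioms for $(ij\mid\emptyset)$ and their iterates are exactly the sign agreements forced by the quadratic forms $v_A^{\T}\adj(M_A)v_A$ — and I would then promote consistency to an actual solution by a limiting argument, rescaling $v$ against $d$ and/or perturbing a solution obtained after deleting one further element of $K$. Once this extension lemma is available, surjectivity of $\rho$, and with it the theorem, follows.
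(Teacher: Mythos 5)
Your proof follows the paper's approach closely. The homeomorphism assertion is handled the same way the paper handles it: negation, matrix inversion, and the hyperoctahedral action are continuous self-maps of $\PR_N$ carrying $\PR_N(s)$ to $\PR_N(s')$, and you supply the routine verifications. Your reduction of the minor inequality to a single deletion, via duality and \Cref{lemma:DelCon}, is a cleaner organization than the paper offers, but it is the same underlying argument: use the continuous restriction (or Schur complement) map $\rho\colon\PR_N(s)\to\PR_K(s')$ and pass to $\pi_0$.

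The substantive point you raise is the surjectivity of $\rho$. You are right that surjectivity (or at least that the image meets every component of $\PR_K(s')$) is precisely what the argument needs, and you correctly identify it as the crux. The paper's own proof simply asserts ``we get a continuous surjective map'' without justification, so you have pinpointed a spot where the published proof is terse rather than taken a wrong turn. However, your proposal does not close this gap either: the ``limiting argument, rescaling $v$ against $d$ and/or perturbing a solution'' is a plan, not a proof. Moreover, the sketch leans on admissibility (the diamond axioms) to guarantee mutual consistency of the sign constraints on the Schur complement $M - vv^{\T}/d$, but admissibility alone cannot do that job, since admissible patterns need not be representable (\Cref{thm:asymp-zero-prob}); any correct extension argument must make essential use of the hypothesis that $s$ itself is representable, e.g.\ by starting from an actual $\Sigma_0\in\PR_N(s)$ and transporting its last row/column to fit the prescribed $M$. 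As written, that step is missing from your proposal (and, in fairness, from the paper's proof as well).
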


\begin{proof}
The homeomorphism assertion was already proved above.
The formation of restrictions or contractions of a sign pattern corresponds
to a sequence of continuous projections of the representation space via
principal submatrices or Schur complements. Hence, we get a continuous
surjective map $\PR_N(s) \to \PR_K(s')$. The inequality follows because the
number of connected components of the image cannot exceed that of the domain.
\end{proof}

The positive diagonal matrices $P_N = \Set{ \diag(d_1, \dots, d_n) \in
\PD_N }$ give another useful action on symmetric matrices via $\Sigma
\mapsto D\,\Sigma\,D$ for $D \in P_N$. By the Cauchy--Binet formula,
\[
  \pr{I:(D\,\Sigma\,D)} = \pr{I:D}^2 \cdot \pr{I:\Sigma},
\]
and hence this action is trivial on sign patterns.
It is useful however, because it provides a strong deformation retraction
of $\PR_N(s)$ which reduces the dimension of this space without changing
its homotopy type. Below we prove a stronger version of this claim.
This dimension reduction is useful for computations.

\begin{definition}
For any sign pattern $s$, its \emph{reduced representation space} is the
subset $\hat{\PR}_N(s)$ of $\PR_N(s)$ such that all diagonal entries are
$\pm1$.
\end{definition}

\begin{proposition} \label{prop:Scaling}
If $s$ is any sign pattern, then $\PR_N(s)$ is homeomorphic to
$\hat{\PR}_N(s) \times P_N$.
\end{proposition}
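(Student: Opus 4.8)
The plan is to exhibit an explicit homeomorphism by scaling. Given $\Sigma \in \PR_N(s)$ with diagonal entries $\sigma_{11}, \dots, \sigma_{nn}$, note first that each $\sigma_{ii} = \pr{i:\Sigma}$ is non-zero because $\Sigma$ is principally regular, so we may set $d_i \defas |\sigma_{ii}|^{-1/2} > 0$ and $D \defas \diag(d_1, \dots, d_n) \in P_N$. Then $D\,\Sigma\,D$ has diagonal entries $d_i^2 \sigma_{ii} = \sgn(\sigma_{ii}) = \pm 1$, and by the Cauchy--Binet identity $\pr{I:(D\,\Sigma\,D)} = \pr{I:D}^2 \cdot \pr{I:\Sigma}$ quoted above, the action of $P_N$ is trivial on sign patterns, so $D\,\Sigma\,D$ still lies in $\PR_N(s)$ and hence in $\hat{\PR}_N(s)$. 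Define
\[
  \Phi: \PR_N(s) \to \hat{\PR}_N(s) \times P_N, \qquad
  \Phi(\Sigma) = \big(D_\Sigma\,\Sigma\,D_\Sigma,\ D_\Sigma^{-1}\big),
\]
where $D_\Sigma = \diag(|\sigma_{11}|^{-1/2}, \dots, |\sigma_{nn}|^{-1/2})$.

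Next I would construct the inverse. Given a pair $(\hat\Sigma, D) \in \hat{\PR}_N(s) \times P_N$, send it to $D\,\hat\Sigma\,D$. This lands in $\PR_N(s)$ again by Cauchy--Binet, since $\hat\Sigma \in \PR_N(s)$ and conjugation by $D \in P_N$ does not change the sign pattern. The two maps are mutually inverse: starting from $\Sigma$, the diagonal of $D_\Sigma \Sigma D_\Sigma$ consists of the signs $\epsilon_i = \sgn(\sigma_{ii})$, which are not changed by scaling, so when we reconstruct $D_\Sigma^{-1}(D_\Sigma \Sigma D_\Sigma)D_\Sigma^{-1} = \Sigma$ we recover the original matrix; conversely, starting from $(\hat\Sigma, D)$, the diagonal of $\hat\Sigma$ is already $\pm 1$, so the entries of $D\,\hat\Sigma\,D$ have absolute values $d_i^2$, whence the reconstructed scaling matrix is $\diag(d_i) = D$ and the reconstructed reduced matrix is $D^{-1}(D\hat\Sigma D)D^{-1} = \hat\Sigma$.

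Finally I would check continuity of both maps. The map $(\hat\Sigma, D) \mapsto D\,\hat\Sigma\,D$ is polynomial in the entries, hence continuous. For $\Phi$, the entries of $D_\Sigma$ are $|\sigma_{ii}|^{-1/2}$; since $\sigma_{ii} \neq 0$ on all of $\PR_N(s)$, the function $t \mapsto |t|^{-1/2}$ is continuous and non-vanishing on the domain, so $\Sigma \mapsto D_\Sigma$ is continuous, and then $\Sigma \mapsto D_\Sigma\,\Sigma\,D_\Sigma$ and $\Sigma \mapsto D_\Sigma^{-1}$ are continuous as compositions of continuous maps. This establishes that $\Phi$ is a homeomorphism. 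I do not expect any real obstacle here; the only points requiring a word of care are that principal regularity is exactly what guarantees the diagonal entries are non-zero (so the rescaling is well-defined and continuous) and that the $P_N$-action is genuinely trivial on sign patterns, both of which are already recorded in the text above. One may additionally remark that, as a byproduct, $P_N \cong \BB R_{>0}^n$ is contractible, so $\PR_N(s)$ deformation retracts onto $\hat{\PR}_N(s)$, which is the dimension reduction alluded to before the proposition.
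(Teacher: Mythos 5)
Your proposal is correct and matches the paper's proof essentially verbatim: both exhibit the explicit rescaling homeomorphism $\Sigma \mapsto (D\,\Sigma\,D, \,\cdot\,)$ with $D = \diag(|\sigma_{ii}|^{-1/2})$ and the evident inverse $(\hat\Sigma, D) \mapsto D^{\pm 1}\hat\Sigma D^{\pm 1}$. The only cosmetic difference is that you record $D_\Sigma^{-1}$ in the second factor where the paper records $D_\Sigma$; since inversion is a self-homeomorphism of $P_N$, this is immaterial.
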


\begin{proof}
Consider the map $\PR_N(s) \to \hat{\PR}_N(s) \times P_N$ sending $\Sigma
\mapsto (D\,\Sigma\,D, D)$ where $D = \diag(\sfrac1{\sqrt{|\sigma_{ii}|}} : i \in N) \in P_N$.
Since $s$ fixes the signs of $\sigma_{ii}$, this is evidently a continuous
map and its inverse given by $(\Sigma', D) \mapsto D^{-1}\, \Sigma'\, D^{-1}$
is continuous as well.
\end{proof}

\begin{remark} \label{rem:ScalingTopology}
Since $P_N$ is contractible, the orbit $P_N \cdot \Sigma$ is contractible
as well for any principally regular $\Sigma$. In particular, $\PR_N(s)$ and
$\hat{\PR}_N(s)$ have the same number of connected components.
\end{remark}

\section{Case study of \texorpdfstring{$\PR_3$}{PR\_3}}
\label{sec:PR3}

We are now ready to give a concise but exhaustive treatment of the
topological features of the smallest non-trivial case~$\PR_3$.

The image of the principal minor map $\pmm_3: \Sym_3 \to \BB R^{2^3}$ is a
semialgebraic set in $\BB R^8$ whose coordinates we denote by $p_K$,
$K \subseteq \Set{1,2,3}$. It is entirely contained in the hyperplane
$p_\emptyset = 1$ and inside of this $\BB R^7$, it is cut out by the
(dehomogenized) \emph{$2 \times 2 \times 2$ hyperdeterminant} \cite{HoltzSturmfels}:
\begin{align*}
  h &\defas p_{123}^2 - 2\, p_{123}\, p_{13}\, p_{2} + p_{13}^2\, p_{2}^2 - 2\, p_{1}\, p_{123}\, p_{23} + 4 p_{12}\, p_{13}\, p_{23} - 2\, p_{1}\, p_{13}\, p_{2}\, p_{23} + {} \\
    &\hphantom{{}={}} p_{1}^2\, p_{23}^2 - 2\, p_{12}\, p_{123}\, p_{3} + 4\, p_{1}\, p_{123}\, p_{2}\, p_{3} - 2\, p_{12}\, p_{13}\, p_{2}\, p_{3} - 2\, p_{1}\, p_{12}\, p_{23}\, p_{3} + p_{12}^2\, p_{3}^2.
\end{align*}
together with the inequalities $p_{ij} \le p_i \, p_j$ for all $i \neq j$.
The real hypersurface $h = 0$ intersects precisely $128 - 24 = 104$ open
orthants in $\BB R^7$; the $24$ orthants which it misses correspond to
non-admissible sign patterns on $\Set{1,2,3}$. Adding the inequality constraints
$p_{ij} \le p_i \, p_j$ completes the description of the image of $\pmm_3$.
The number of orthants that it intersects drops to $38$ which correspond
exactly to the admissible sign patterns; this proves that all of them are
representable.

The hyperdeterminant is a polynomial of degree $4$ in $7$ variables with
$12$ terms. For a symbolic algorithm like cylindrical algebraic decomposition
which can compute the representable sign patterns from the definition
of $\pmm_3$, these numbers are already quite high (and the situation for $n=4$
is much worse). To classify sign patterns according to representability,
a better approach is to work in the coordinates of $\PR_N$ and to exploit
the various symmetries described in \Cref{sec:Constructions}.

\begin{table}
\begin{tabular}{cccc}
$\HypGrp_3$ representative & Representation & Orbit size & Connected components \\ \hline
\TT{++++++++} & $(0, 0, 0)$    & $8$  & $1$  \\
\TT{+++++++-} & $(-2, -2, -1)$ & $8$  & $4$  \\
\TT{++++++--} & $(-2, -2, -4)$ & $12$ & $2$  \\
\TT{+++++---} & $(-2, -4, -4)$ & $8$  & $4$  \\
\TT{++++----} & $(-4, -4, -4)$ & $2$  & $16$
\end{tabular}
\caption{Sign patterns grouped by hyperoctahedral orbits. The sign vector
$s$ is given as a string indexed by $\emptyset, 1, 2, 3, 12, 13, 23, 123$
in order.
The representation only lists off-diagonal entries $(\sigma_{12}, \sigma_{13},
\sigma_{23})$; the diagonal entries are all equal to~$3$. In~total there
are $38$ sign patterns and $128$ connected components.}
\label{tab:B3}
\end{table}

\pagebreak %

The $38$ admissible sign patterns on $n=3$ split into only $5$ orbits modulo
$\HypGrp_3$. Since representability is preserved by this group action,
the classification task is reduced to only $5$~instances which are listed
in the first column in \Cref{tab:B3}. Furthermore, by \Cref{lemma:PositiveHypGrp}
we may pick a representative~$s$ of the orbit on which all $1 \times 1$-minors
are positive. Lastly \Cref{prop:Scaling} allows us to fix the diagonals to~$1$.
In this setting, the inequalities $\pr{i:\Sigma} > 0$ are trivial. The
$2 \times 2$-minor inequality $s(ij) \cdot \pr{ij:\Sigma} > 0$ reduces
to whether $\sigma_{ij}$ lies inside the interval $(-1,1)$ or inside
the union $(-\infty,-1) \cup (1,\infty)$. The last inequality
$s(123) \cdot \pr{123:\Sigma} > 0$ has degree~$3$ in $3$ variables
and $5$~terms which is simpler in every metric than the hyperdeterminant.
These systems are solved instantly in \TT{Mathematica}. The entries of
particularly simple points in $\PR_3(s)$ are given in the second column
of \Cref{tab:B3}.

\begin{figure}
\centering
\begin{minipage}{.3\linewidth}
\centering
\includegraphics[width=\linewidth]{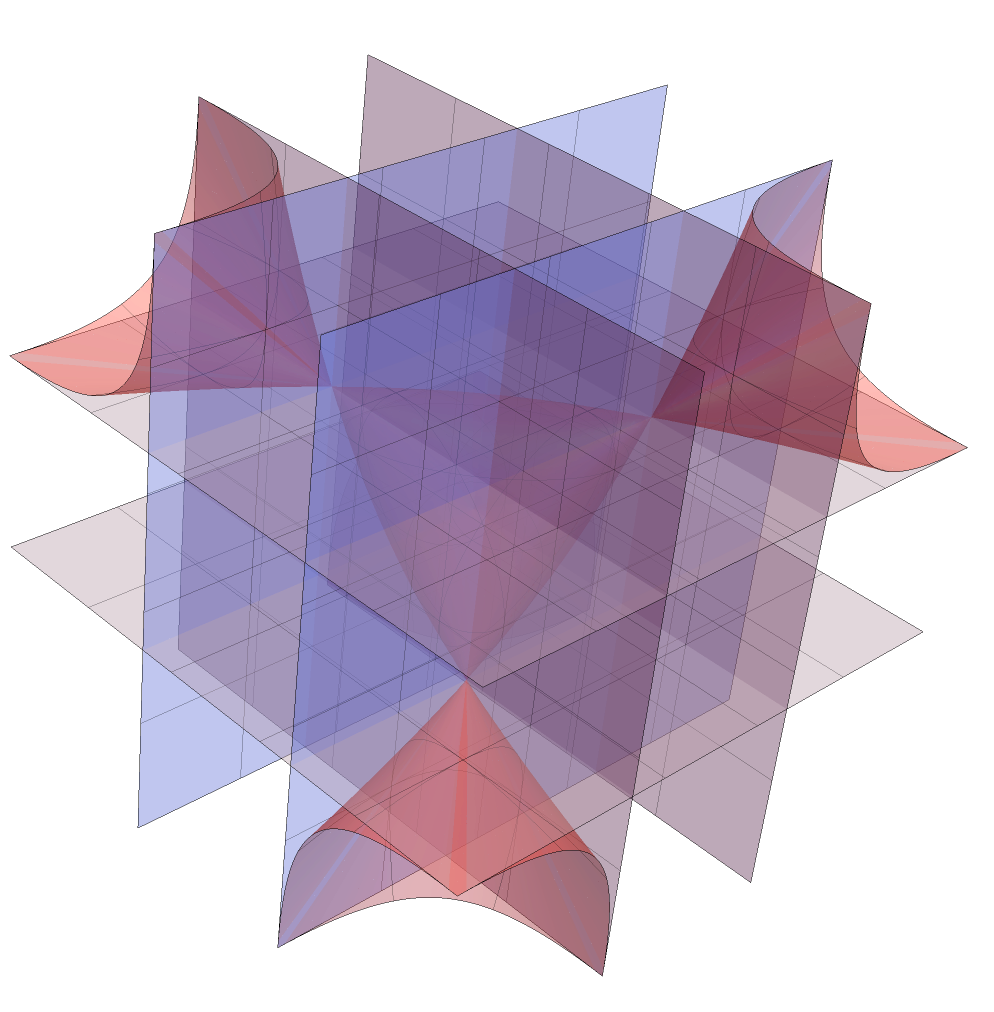}
\end{minipage}
~
\begin{minipage}{.3\linewidth}
\centering
\includegraphics[width=\linewidth]{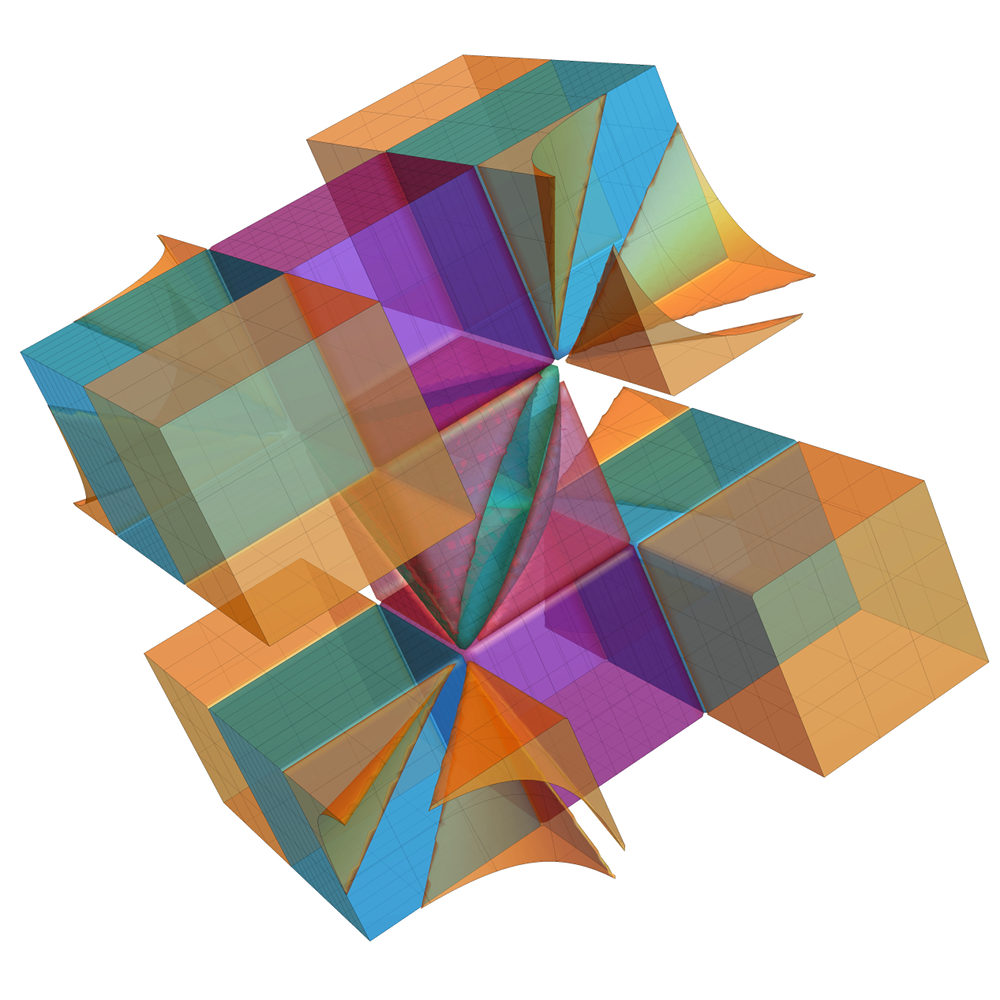}
\end{minipage}
~
\begin{minipage}{.3\linewidth}
\centering
\includegraphics[width=.3\linewidth]{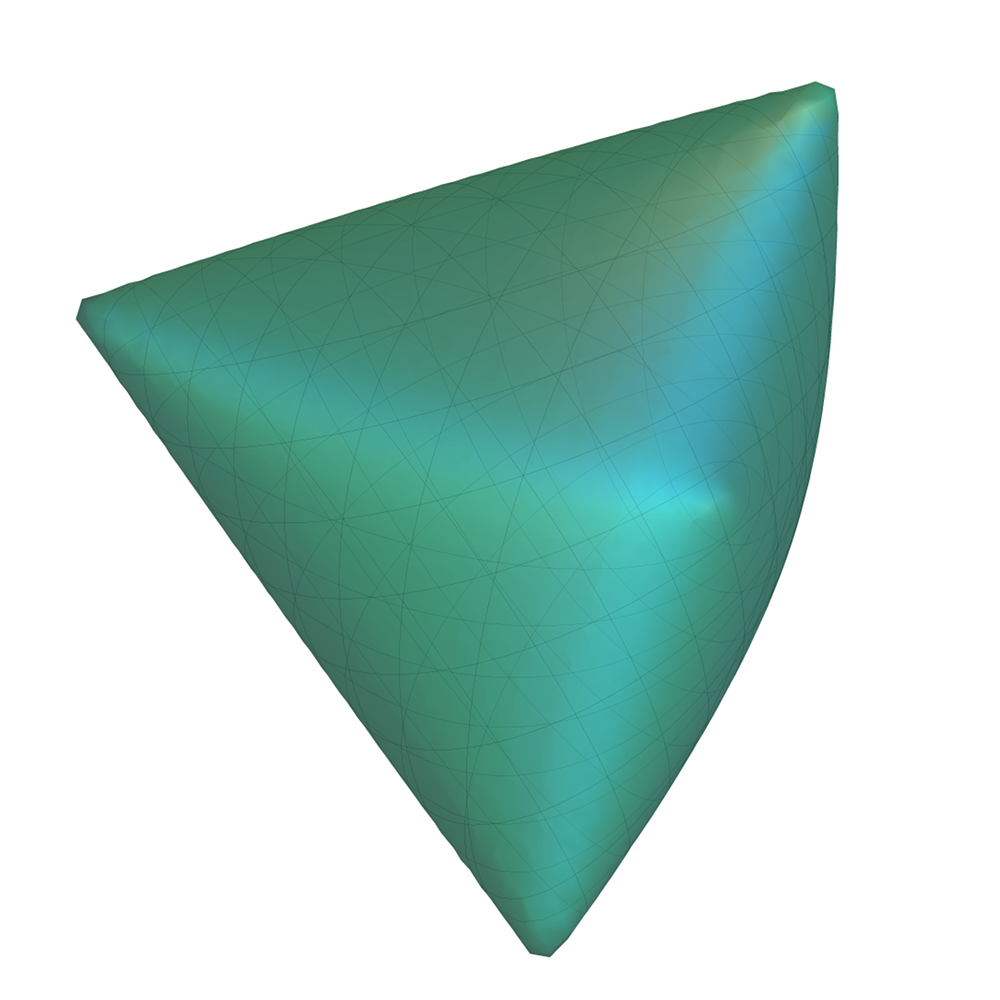}
\includegraphics[width=.3\linewidth]{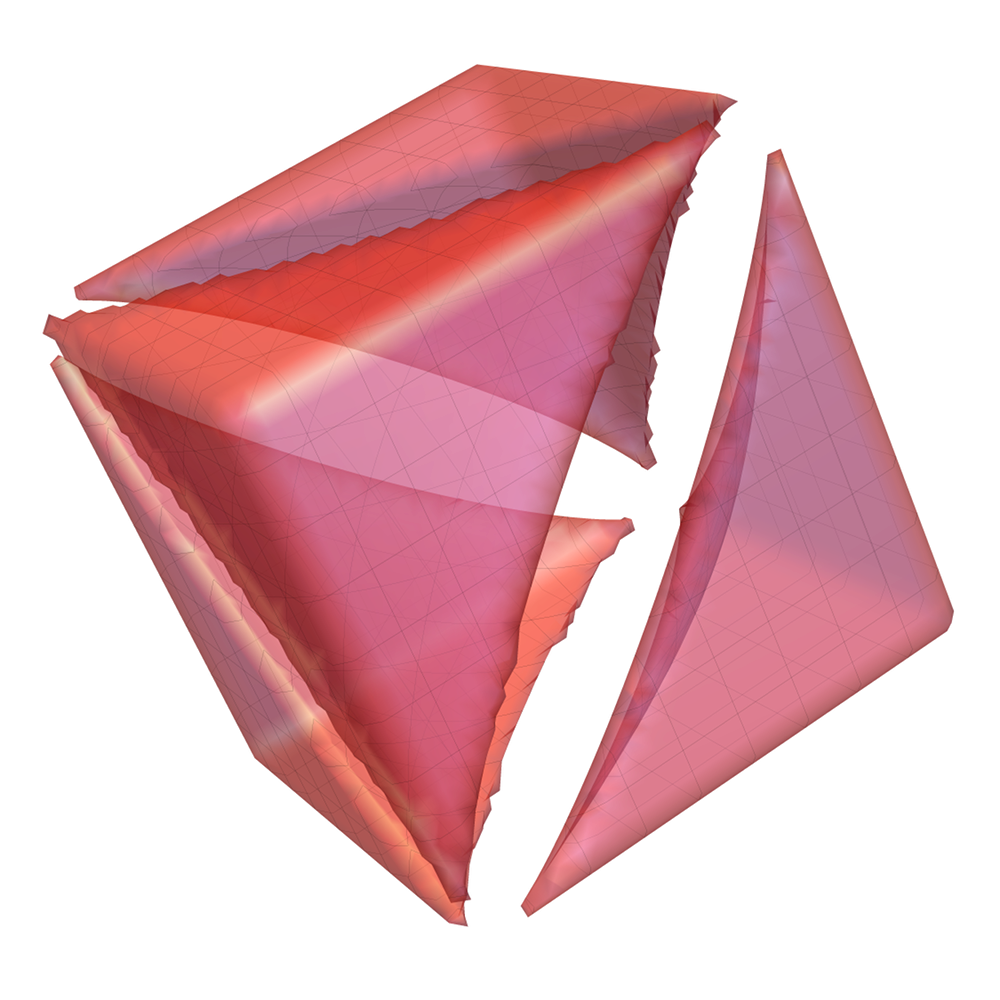} \\
\includegraphics[width=.3\linewidth]{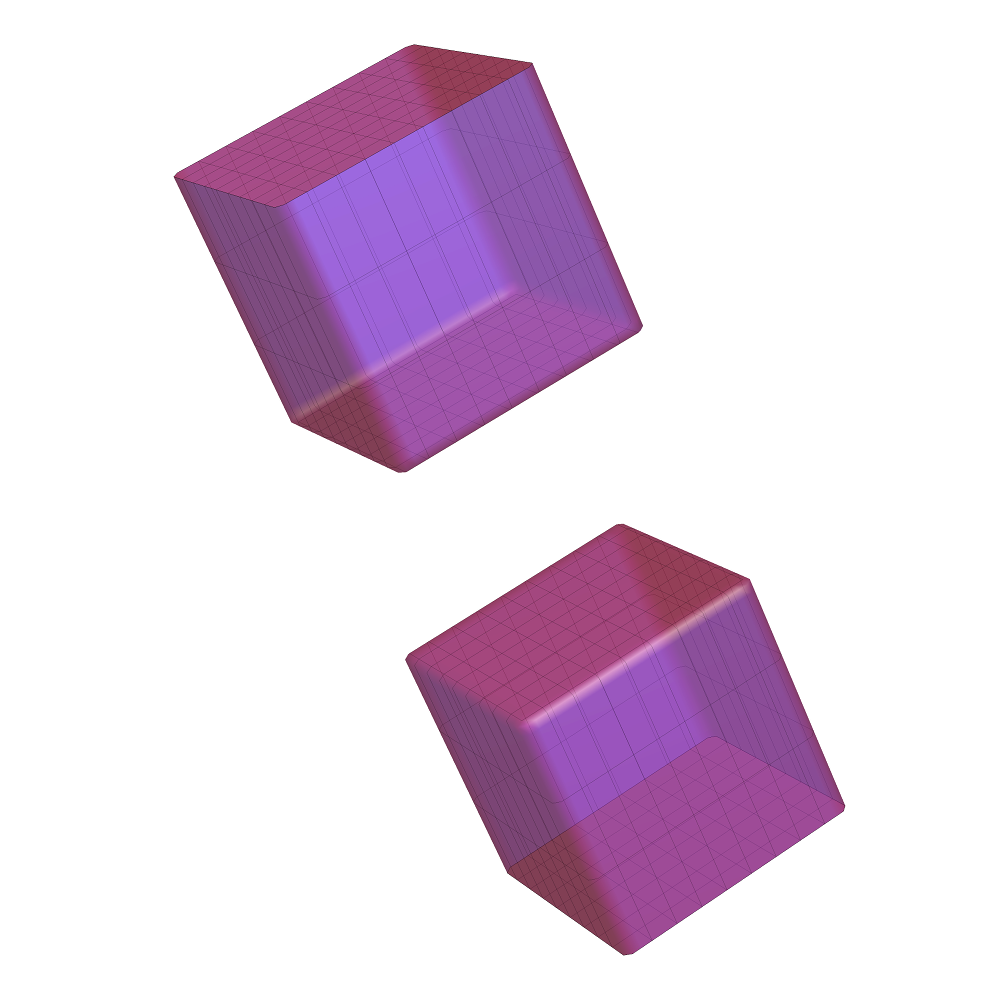}
\includegraphics[width=.3\linewidth]{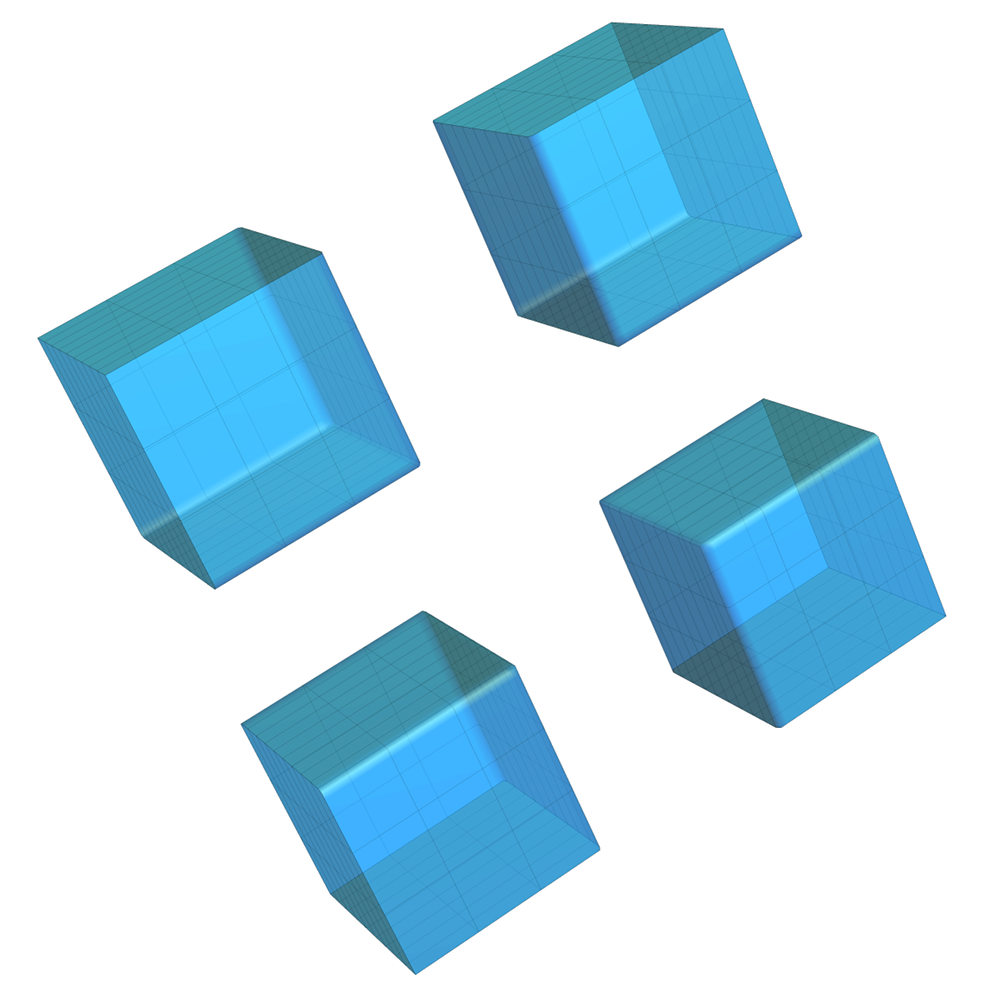}
\includegraphics[width=.35\linewidth]{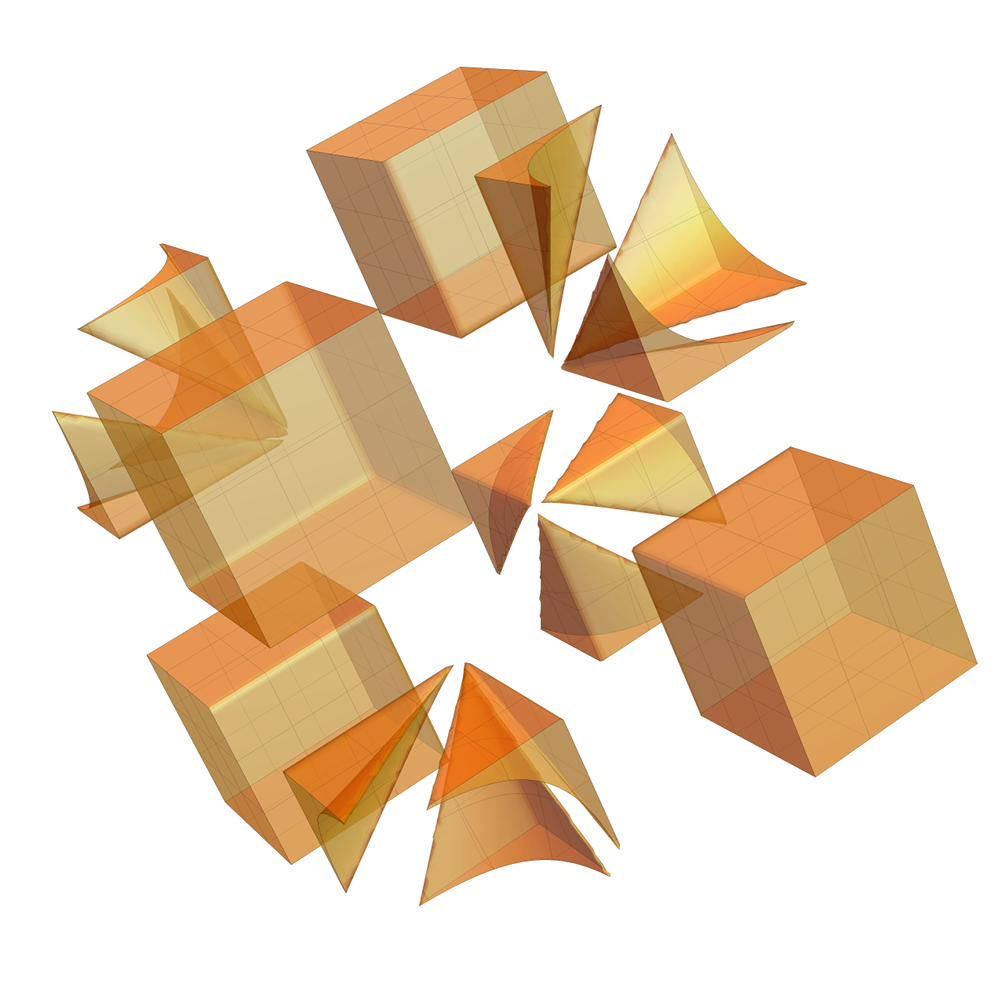}
\end{minipage}
\caption{The arrangement of hypersurfaces $\protect\pr{K:\Sigma} = 0$
whose complement is $\hat{\PR}_3$ as well as the pieces $\hat{\PR}_3(s)$
for the $\HypGrp_3$ representatives $s$ from \Cref{tab:B3}, together
in the middle and and separately (on different scales) on the right.}
\label{fig:Reps}
\end{figure}

The reduced representation spaces $\hat{\PR}_3(s)$ are $3$-dimensional and
plotted in~\Cref{fig:Reps}. Their numbers of connected components can
intuitively be seen from these pictures and they are recorded in the last
column in \Cref{tab:B3}. They match the numbers obtained by symbolic
computations using cylindrical algebraic decomposition in \TT{Mathematica}.
By the remarks in \Cref{sec:ActionRep}, the number of connected components
of $\PR_3$ can be computed by taking the dot product of the ``Orbit size''
and ``Connected components'' columns of this table.
This number is $128 = 2^7$. However, the equality with the total number of sign
patterns $s \in \Set{\TT+, \TT-}^{2^3}$ (satisfying $s(\emptyset) = \TT+$) is
only accidental as we know that not every sign pattern is representable and
that representation spaces need not be connected. We pose the following
computational problem which asks whether the connected components of $\PR_3$
are still ``uniformly basic semialgebraic''.

\begin{challenge}
If possible, find 7 semialgebraic functions such that the $128 = 2^7$ regions
defined by placing sign constraints on them are precisely the connected
components of~$\PR_3$. Can these functions be chosen polynomial?
\end{challenge}

By the asymptotic result in \Cref{cor:pr-h0}, we know that in general
the number of connected components of $\PR_n$ cannot be $2^{2^n - 1}$
(as is the case for $n=3$). We do not have enough data to conjecture
any other closed form expression. Even the number of connected
components of $\PR_4$ is not known.

\begin{challenge}
Find a CAD of $\PR_4$ inside $\Sym_4 \cong \BB R^{10}$ (or of $\hat{\PR}_4$
in $\BB R^6$) or of its image under the principal minor map inside $\BB R^{16}$.
How many connected components does $\PR_4$ have?
\end{challenge}

\begin{remark}
We show in \Cref{sec:Rep5} below that all admissible sign patterns on $n=4$
are representable. Combining the minor inequality from \Cref{thm:ActionsTop}
and the data on representation spaces of their $3$-minors in \Cref{tab:B3},
one obtains a lower bound of $7\,848$ for the number of connected components
of~$\PR_4$.
\end{remark}

\begin{remark}
Joseph Cummings reports in private communication that the algorithm
described in \cite{SemialgebraicConnectivity,SmoothConnectivity} yields
an upper bound of $24\,352$ connected components for $\PR_4$, provided
that the monodromy loops exhausted all critical points of the associated
rational function. However, the amount of critical points makes
certification procedures impractical. The same number has been obtained
using the \TT{HypersurfaceRegions} package developed independently by
Breiding, Sturmfels and Wang \cite{HypersurfaceArrangements}. The source
code and more details about this computation are available on our website.
\end{remark}

\section{Enumeration and asymptotics} \label{sec:Counting}

In this section, we study the sequence \(a_n\) counting the number of admissible
sign patterns of size \(n\), and the sequence \(r_n\) counting the number of
sign patterns of principally regular real symmetric \(n \times n\) matrices.
There are two clear inequalities from the definitions:
\[
  r_n \le a_n \le 2^{2^n - 1}
\]
since representable sign patterns are admissible and admissible sign patterns
satisfy $s(\emptyset) = \TT+$.
The main result of the section is \Cref{thm:asymp-zero-prob}, which states that
\(a_n\) grows much quicker than \(r_n\), but we give the best bounds
we can for the asymptotics of both sequences. We begin with explicit
computations on small ground sets $n \le 5$.

\subsection{Enumerating sign patterns}
\label{sec:counting-data}

The definition of admissible sign patterns on ground set $N$ is a simple
boolean formula in the $2^n$ variables $V_K$, one for each $K \subseteq N$,
mapping $s(K) = \spos$ to $V_K = \TT{true}$. The resulting boolean formula
in conjunctive normal form (CNF) has $1 + \binom{n}{2} 2^n$ clauses
(four clauses for each diamond). Checking whether a boolean formula in
CNF has a satisfying assignment is a well-known problem in computer
science called \TT{SAT}. Even though \TT{SAT} is an \TT{NP}-complete
problem, state-of-the-art solvers are extremely efficient pieces of
software and are able to \begin{inparaenum}
\item produce a satisfying assignment (or a proof of unsatisfiability),
\item count and
\item enumerate all solutions
\end{inparaenum}
in cases of practical interest.

\begin{table}
\begin{tabular}{cccc}
\(n\) & \(a_n\) & \(a'_n\) & \(r'_n\) \\ \hline
$2$        & $6$ & $2$ & \(2\) \\
$3$        & $38$ & $5$ & \(5\) \\
$4$        & $990$ & $24$ & \(24\) \\
$5$        & $395\,094$ & $434$ & \(\geq 433\) \\
$6$        & $33\,433\,683\,534$ & $\le 7\,109\,686\,748$ &
\end{tabular}
\caption{The number of admissible sign patterns \(a_n\), the number of hyperoctahedral
orbits \(a'_n\), and the number of representable hyperoctahedral orbits \(r'_n\)
for ground sets of sizes $n = 2, \dots, 6$. The~upper bound on $a'_6$ uses additional
axioms implied by \Cref{lemma:PositiveHypGrp}.}
\label{tab:Signs}
\end{table}

We have employed the three SAT solvers \TT{GANAK} \citesoft{GANAK},
\TT{sharpSAT-TD} \citesoft{sharpsatTD} and \TT{nbc\_minisat\_all} \citesoft{TodaSAT}
through the \TT{CInet::ManySAT} \citesoft{ManySAT} interface
to count and enumerate admissible sign patterns on up to $n=5$
and subsequently reduced the listing modulo the hyperoctahedral group.
Our~results are summarized in the first three columns of \Cref{tab:Signs}.
These computations led to the creation of sequence $a_n = \text{%
\href{https://oeis.org/A375346}{\TT{A375346}}}$ in the On-Line Encyclopedia
of Integer Sequences~\cite{OEIS}.

For $n=6$, the number of admissible sign patterns is too large to be enumerated
and symmetry-reduced. We instead offer the number of admissible sign patterns
all of whose singletons are positively oriented. By~\Cref{lemma:PositiveHypGrp}
this gives an upper bound on the number of hyperoctahedral orbits which appears
to be quite weak. The predicted upper bound for $a_6'$ is still too large to
enumerate these sign patterns and perform a symmetry reduction modulo~$\HypGrp_6$.

\begin{challenge}
Find hyperoctahedral representatives for the admissible sign patterns on $n=6$.
\end{challenge}

\subsection{Asymptotics of admissible sign patterns}
\label{sec:admissible-asymp}

The number of admissible sign patterns is doubly exponential. The following
bounds on its logarithm are easy to derive:
\begin{equation}
  \label{eq:Bounds}
  2^{n-1} \le \log_2 a_n \le 1 + 2 \log_2 a_{n-1}.
\end{equation}
The lower bound comes from sign patterns satisfying $s(K) =
(-1)^{|K|/2}$ whenever $|K|$ is even. Any such $s$ is vacuously
admissible, no matter the values on sets with odd cardinality. This
gives $\sum_i \binom{n}{i}$ patterns, where the sum extends over all
$1 \le i \le n$ with $i$ odd, summing to a total of~$2^{n-1}$.
The upper bound follows from the map $A_{mN} \to \Set{\spos,\sneg} \times
A_N \times A_N$, $s \mapsto (s(mN),\, s \del m, s \con m)$, where
$A_N$ denotes the set of admissible sign patterns on ground set~$N$.
That this map is injective is an easy consequence of \Cref{def:Minors}.
Asymptotically, this bounds $\log_2 a_n$ between $2^{n-1}$~and~$2^n$.
The finer details are settled by the following result which was
suggested to us by Andrey Zabolotskiy together with references to
two papers in which the same sequence appears, counting
\begin{inparaenum}
\item vertices in a certain class of Lipschitz polytopes \cite{Wasserstein},
and \item hypercube Adinkra height functions \cite{Heights}.
\end{inparaenum}

\begin{theorem} \label{thm:Asymptotic}
The number of admissible sign patterns satisfies $\log_2 a_n = 2^{n-1} + \CC O(1)$.
\end{theorem}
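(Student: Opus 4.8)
The plan is to sharpen the crude recursion $\log_2 a_n \le 1 + 2\log_2 a_{n-1}$ coming from the injection $s \mapsto (s(mN), s\del m, s\con m)$ into something that telescopes to a bounded error term. The key observation is that the two minors $s\del m$ and $s\con m$ of an admissible pattern are not independent: they are both admissible patterns on the same $(n-1)$-element ground set and they are tightly coupled by the diamond axioms that involve the element $m$. So instead of the trivial bound $a_n \le 2\,a_{n-1}^2$ I would look for a bound of the form $a_n \le c_n \cdot a_{n-1}^2$ with $\sum_n \log_2 c_n$ converging, OR — and this is cleaner — reformulate admissible sign patterns as a constraint-satisfaction / transfer-matrix problem on the hypercube and count directly.

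Here is the route I expect to work. First, pass to the $3$-coloring picture hinted at in the outline (``identifying them with certain 3-colorings of the hypercube''): an admissible pattern is essentially a height function on the cube $C_N$, i.e.\ a map $h \colon 2^N \to \BB Z$ with $|h(iK) - h(jK)| + |h(K) - h(ijK)| \le $ (some local bound) for every diamond, modulo the freedom on odd-cardinality sets being constrained by neighbours. This is exactly the setup of the Adinkra height-function and Lipschitz-polytope vertex counts cited (\cite{Heights}, \cite{Wasserstein}), so the count $a_n$ equals the count in those papers and the asymptotic $\log_2 a_n = 2^{n-1} + \CC O(1)$ is precisely their result. I would therefore (i) make the bijection between admissible sign patterns and the objects of \cite{Heights}/\cite{Wasserstein} explicit and routine to check, using the diamond axiom rewritten as the statement that on each $2$-face of the cube the four signs, read as $\pm 1$, cannot have a single odd one out among the products $s(K)s(ijK)$, $s(iK)s(jK)$; (ii) invoke the known asymptotics from those references for the matching sequence, which gives $\log_2 a_n = 2^{n-1} + \CC O(1)$.

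If one instead wants a self-contained argument, the plan is: define $b_n := \log_2 a_n - 2^{n-1}$, so the lower bound $2^{n-1}\le \log_2 a_n$ gives $b_n \ge 0$, and the trivial recursion gives $b_n \le 1 + 2 b_{n-1}$, which is too weak (it allows $b_n$ to grow like $2^n$). The fix is to exploit that freedom in an admissible pattern lives essentially only on odd-cardinality sets and is heavily constrained: fix the pattern on all even sets (there are $2^{n-1}$ of them including $\emptyset$, but $\emptyset$ is forced, and not all even assignments extend), and then each odd set $K$ sits inside $|K|$ diamonds $(iK'|\,K\setminus i)$ with all other corners of even size, which pin $s(K)$ to a bounded set of choices that is generically a singleton. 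Quantifying ``generically a singleton'' is the crux: I would show that the number of admissible patterns with a given even-part is at most $2^{O(1)}$ on average over even-parts — equivalently, $a_n \le 2^{\,2^{n-1}} \cdot C$ for an absolute constant $C$ — by a counting argument on how many odd sets can be ``free'' simultaneously, using that two odd sets at Hamming distance $2$ in $2^N$ cannot both be free (they share a diamond). This is the main obstacle: turning ``the free odd sets form a sparse set in the cube'' into the clean $\CC O(1)$ bound rather than an $\CC o(2^{n-1})$ bound. I expect the sparse-set argument, combined with the even-part having at most $2^{2^{n-1}-n+O(1)}$ admissible extensions of its own, to close the gap; matching against the references confirms the constant is genuinely $\CC O(1)$ and not merely $\CC o(2^{n-1})$.
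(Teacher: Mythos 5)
Your first route is indeed the paper's route: set up a bijection between admissible sign patterns and proper $3$-colorings $c\colon 2^N \to \BB Z/3\BB Z$ of the hypercube graph with $c(\emptyset)=0$, and then invoke the known asymptotic count $(2e+o(1))\,2^{2^{n-1}}$ of such colorings. The paper does this explicitly: given $c$, it defines $s(\emptyset)=\spos$ and recursively $s(iK)=s(K)$ if $c(iK)=c(K)+1$ and $s(iK)=-s(K)$ if $c(iK)=c(K)-1$, then proves well-definedness and admissibility by induction over cardinality and exhibits the inverse map by the mirror rule. You did not construct the bijection, and your one attempt to restate the diamond axiom on a $2$-face is wrong. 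The axiom is \emph{asymmetric} in the pair $\{K,ijK\}$ versus $\{iK,jK\}$: among the four possible values of the ordered pair of products $\bigl(s(K)s(ijK),\,s(iK)s(jK)\bigr)$, only $(\spos,\sneg)$ is forbidden, not any ``odd one out'' symmetric condition. Concretely, exactly $4$ of the $16$ sign assignments to a $2$-face violate the axiom, namely those with $s(K)=s(ijK)$ while exactly one of $s(iK),s(jK)$ differs from that common value; assignments where the lone dissenter sits at $K$ or at $ijK$ are allowed. Any reformulation symmetric in the four corners, or in the two diagonal products, describes the wrong set.

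Your alternative self-contained route has a genuine flaw beyond the obstacle you flag. Its premise, that fixing $s$ on all even-cardinality sets ``pins $s(K)$ to a bounded set of choices that is generically a singleton'' for odd $K$, is false. For the even-part $s(K) = (-1)^{|K|/2}$ -- precisely the paper's own lower-bound construction -- every diamond has either the conclusion automatically true (when $|K|$ is even) or the premise automatically false (when $|K|$ is odd), so \emph{all} $2^{2^{n-1}}$ odd assignments extend to admissible patterns and nothing is pinned. This is not a degenerate exception; it is the entire source of the lower bound $2^{2^{n-1}} \le a_n$. Moreover, your parity bookkeeping is off: every diamond $(ij|K)$ has exactly two odd-cardinality corners and two even ones (the pair $\{K,ijK\}$ has one parity, $\{iK,jK\}$ the other), so there is no diamond containing one odd set and three even sets, and the claimed system of constraints pinning a single odd set does not exist. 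The route as sketched cannot work; the bijection-and-cite route is the one to pursue, with the bijection made precise as above.
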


\begin{proof}
Let $C_N$ denote the \emph{hypercube graph} whose vertices are the subsets
of~$N$ with an edge between $I, J \subseteq N$ if and only if $J = Ij$ for
some $j \not\in I$. The main result of \cite{HomHamming} is that the number
of $3$-colorings of $C_N$ which assign a fixed color to $\emptyset$ is
$(2e \pm e^{-\Omega(n)}) 2^{2^{n-1}}$. We will establish a bijection between
admissible sign patterns and these $3$-colorings, which implies~the~claim.

Identify the colors with the additive group $\BB Z/3\BB Z$ and let $F_N$ be
the set of all proper $3$-colorings $c: 2^N \to \BB Z/3\BB Z$ of $C_N$ such
that~$c(\emptyset) = 0$; let $A_N$ be the set of admissible sign patterns.
Given $c \in F_N$, define a sign pattern $s$ by setting $s(\emptyset) \defas \spos$
and, continuing recursively,
\begin{align}
  \label{eq:defs}
  s(iK) \defas \begin{cases}
     s(K), & \text{if $c(iK) = c(K) + 1$}, \\
    -s(K), & \text{if $c(iK) = c(K) - 1$}.
  \end{cases}
\end{align}
Since there is an edge between $K$ and $iK$ in $C_N$ and hence $c(iK) \not= c(K)$,
this imposes at least one value on each subset of~$N$. We show by induction on
$|K|$ that $s$ is well-defined and admissible. These facts are clear for
$0 \le |K| \le 1$ since $s(\emptyset) = \spos$ is fixed a~priori and there
is only a single edge between $\emptyset$ and $i$ for each $i \in N$.

Now assume that the partial sign pattern $s$ defined on sets of $\text{cardinalities} \le k+1$
is well-defined and admissible. Take any set $L$ of cardinality $k+2$. To show
that the value $s(L)$ does not depend on the edge chosen in \eqref{eq:defs},
write $L = ijK$ and consider the two edges from $iK$ and $jK$ to $ijK$.
Suppose that $s(ijK)$ is not well-defined. Up to symmetries, there are two
cases to consider:
\begin{enumerate}[label=(\alph*)]
\item Assume that \eqref{eq:defs} mandates $s(ijK) = s(iK)$ as well as
  $s(ijK) = s(jK)$ but that $s(iK) = -s(jK)$. By definition of $s$,
  this means $c(ijK) = c(iK) + 1 = c(jK) + 1$, hence $c(iK) = c(jK)$.
  Since $iK$ and $jK$ have the same color and an edge to the same set~$K$,
  the induction hypothesis implies $s(iK) = s(jK)$, so this cannot occur.

\item Assume that \eqref{eq:defs} mandates $s(ijK) = s(iK)$ as well as
  $s(ijK) = -s(jK)$ but that $s(iK) = s(jK)$. The definition of $s$
  implies $c(ijK) = c(iK) + 1 = c(jK) - 1$, so $c(iK) \not= c(jK)$.
  Having different colors and the same neighbor~$K$, this implies
  $s(iK) \not= s(jK)$, another contradiction.
\end{enumerate}
This shows that $s(ijK)$ is well-defined. To show admissibility, assume that
$s(iK) \not= s(jK)$. This implies $c(iK) \not= c(jK)$ as seen above. Since
$K$, $iK$, $jK$ and $ijK$ form a diamond, the assumption that $c$ is a
$3$-coloring implies $c(ijK) = c(K)$ and hence $s(ijK) = s(iK) = -s(K)$
or $s(ijK) = -s(iK) = -s(K)$ depending on whether $c(iK) = c(K) \pm 1$.
Thus, $s$ is admissible.

The inverse of this map sends an $s \in A_N$ to a $c \in F_N$ via a
similar construction: $c(\emptyset) \defas 0$ and %
\[
  c(iK) \defas \begin{cases}
    c(K) + 1, & \text{if $s(iK) = s(K)$}, \\
    c(K) - 1, & \text{if $s(iK) = -s(K)$}.
  \end{cases}
\]
The well-definedness of $c$ follows via a case distinction analogous to
the one above. As it is well-defined, the $3$-coloring property is obvious.
\end{proof}

\subsection{Asymptotics of representable sign patterns}
\label{sec:representable-complexity}
Our upper bound on the number of representable sign patterns comes from bounding
the algebraic complexity of the minor equations which cut \(\PR_N\) out of \(\Sym_N\).
First we quote a result of Basu, Pollak, and Roy
in a simplified form for our case. For any family of polynomials
$\CC P = \Set{P_1, \dots, P_\ell} \subseteq \BB R[X_1, \dots, X_k]$ we
consider the space $\CC C(\CC P) \defas \BB R^k \setminus \bigcup_{i=1}^{\ell} V(P_i)$
which is the complement of the arrangement of varieties~$P_i = 0$.

\begin{proposition}[{\cite[Theorem 1]{B.P.R1996}}]
  \label{prop:complexity-bound}
  For any family of \(\ell\) polynomials \(\mathcal{P} = \Set{P_1, \ldots, P_\ell}
  \subseteq \mathbb{R}[X_1, \ldots, X_k]\), where each polynomial has degree
  at most \(d\), we have
  \[
    \dim H^0(\CC C(\CC P)) \le \binom{\ell}{k} \CC O(d)^k.
  \]
\end{proposition}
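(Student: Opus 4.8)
The statement is precisely Theorem~1 of \cite{B.P.R1996}; the plan is to recall its proof by the critical-points method, which converts the topological count into a Bézout-type bound. Since $\dim H^0(\CC C(\CC P))$ is the number of connected components of the complement and each $P_i$ has constant sign on any such component, every component lies in the realization $\CC R(\sigma) \defas \Set{x \in \BB R^k : \sgn P_i(x) = \sigma_i \text{ for all } i}$ of a unique strict sign vector $\sigma \in \{+,-\}^\ell$, so it suffices to bound the total number of connected components of the sets $\CC R(\sigma)$ over all $\sigma$. First I would intersect $\BB R^k$ with a ball of large radius, which does not decrease this number (each component meets the ball), and add the sphere equation $Q_0 \defas \lVert X \rVert^2 - R^2$ to the family; a standard semialgebraic perturbation of the $P_i$ (via Sard--Thom transversality) then puts the real hypersurfaces $V(Q_0), V(P_1), \dots, V(P_\ell)$ into general position --- pairwise transverse, with no more than $k$ of them through any point, and with nondegenerate critical points of $\pi$ on each stratum --- while only increasing the number of components of the complement, which is carried back to the original arrangement by a limiting argument.

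Next I would attach to each connected component $C$ of the (now bounded) complement an \emph{extremal witness}. Fix a generic linear functional $\pi$, say the first coordinate; its restriction to the compact set $\overline C$ attains a minimum at some point $x \in \partial C$, and $\partial C$ lies in the union of the hypersurfaces. Recursing on the smooth strata of the arrangement through $x$, general position forces $x$ to be a critical point of $\pi$ restricted to an intersection $V(Q_{i_1}) \cap \dots \cap V(Q_{i_m})$ of $m \le k$ of the hypersurfaces (if $\pi$ were not critical there, a nearby point of that intersection --- which also lies in $\overline C$ --- would have a smaller value of $\pi$). By Lagrange multipliers this is encoded by a square system of $k$ polynomial equations in $k$ variables of degrees $\CC O(d)$ (the sphere contributing degree $2$), so it has at most $\CC O(d)^k$ complex, hence real, solutions by Bézout's bound. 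Summing over the $\sum_{m=0}^{k}\binom{\ell+1}{m}$ choices of index set $\Set{i_1, \dots, i_m}$ and absorbing lower-order terms yields $\binom{\ell}{k}\,\CC O(d)^k$; and since a single critical point is, in general position, in the closure of at most $2^k$ components, each witness accounts for at most $2^k$ components, which only changes the implied constant.

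The step I expect to be the main obstacle is the reduction to a bounded, general-position arrangement: one must ensure simultaneously that adding a bounding sphere and perturbing the $P_i$ into smooth, transverse hypersurfaces with nondegenerate $\pi$-critical points does not destroy connected components of the complement, and that the count survives the limits in the radius and the perturbation parameter. This semialgebraic-continuity argument is the technical heart of \cite{B.P.R1996}; granting it, the remainder is Bézout's inequality and the binomial bookkeeping above. A less self-contained alternative would be to invoke the Oleinik--Petrovsky--Thom--Milnor bound for a single sign condition to control the topology and use the critical-points method only to harvest the improved combinatorial factor $\binom{\ell}{k}$ in place of $\ell^k$ or $2^\ell$.
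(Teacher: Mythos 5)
The paper does not give its own proof of this proposition --- it is quoted, in simplified form, as Theorem~1 of \cite{B.P.R1996}, so there is no in-paper argument to compare against. Your sketch is a reasonable account of the critical-points method used by Basu, Pollack and Roy: reduce to a bounded arrangement in general position, attach to each cell a critical point of a generic projection restricted to an intersection of at most $k$ of the hypersurfaces, and bound these witnesses via Bézout. Two places call for more care. First, raw Lagrange multipliers give $k+m$ equations in $k+m$ unknowns (the $m$ constraints plus the $k$ components of the gradient identity); to obtain a square system of $k$ equations of degree $\CC O(d)$ in the $k$ ambient variables you should instead encode criticality via vanishing of $(m{+}1)\times(m{+}1)$ minors of the Jacobian of $(\pi, Q_{i_1},\dots,Q_{i_m})$, which is how the bound is actually harvested in \cite{B.P.R1996}. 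Second, the claim that ``a nearby point of that intersection --- which also lies in $\overline{C}$ --- would have a smaller value of $\pi$'' does not follow merely from $x \in \partial C$: moving tangentially to $V(Q_{i_1})\cap\cdots\cap V(Q_{i_m})$ preserves the zero conditions but can violate the strict sign conditions cutting out $C$, so one needs the transversality and perturbation setup --- the part you correctly flag as the technical heart --- to guarantee that some descending arc along the stratum stays in $\overline{C}$. Granting those, the binomial bookkeeping you give is the standard one.
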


\begin{corollary}
  \label{cor:pr-h0}
  The number of connected components of \(\PR_N\) is bounded by
  \[
    \log_2(\dim H^0(\PR_N)) \leq \CC O(n^3).
  \]
\end{corollary}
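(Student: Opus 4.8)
The plan is to apply \Cref{prop:complexity-bound} to the explicit polynomial family cutting $\PR_N$ out of $\Sym_N$. First I would set $k = \binom{n+1}{2} = \dim \Sym_N$, since a symmetric matrix is parametrized by its entries on and above the diagonal. The arrangement of varieties whose complement is $\PR_N$ consists of the $2^n$ hypersurfaces $\pr{K:\Sigma} = 0$ for $K \subseteq N$, so $\ell = 2^n$. Each such polynomial is the determinant of a principal submatrix of size at most $n$, hence has degree at most $n =: d$. Feeding these parameters into the bound gives
\[
  \dim H^0(\PR_N) \le \binom{2^n}{\binom{n+1}{2}} \CC O(n)^{\binom{n+1}{2}}.
\]
The remaining work is purely to take $\log_2$ of the right-hand side and check it is $\CC O(n^3)$.

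For the binomial coefficient, I would use the crude estimate $\binom{\ell}{k} \le \ell^k = (2^n)^{\binom{n+1}{2}}$, so that $\log_2 \binom{2^n}{\binom{n+1}{2}} \le n \binom{n+1}{2} = \CC O(n^3)$. For the second factor, $\log_2 \CC O(n)^{\binom{n+1}{2}} = \binom{n+1}{2} \cdot \CC O(\log n) = \CC O(n^2 \log n)$, which is dominated by $\CC O(n^3)$. Adding the two contributions yields $\log_2(\dim H^0(\PR_N)) \le \CC O(n^3)$, as claimed. The whole argument is a few lines of arithmetic once the parameters $k, \ell, d$ are correctly read off.

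The one genuine point requiring care — and the place where I would be most worried about an off-by-something error — is the bookkeeping of $k$ versus $\ell$ and the direction in which $\binom{\ell}{k}$ is being bounded. \Cref{prop:complexity-bound} wants the number of polynomials $\ell$ and the ambient dimension $k$, and since here $\ell = 2^n$ dwarfs $k = \CC O(n^2)$ for large $n$, the dominant term in the logarithm genuinely comes from $\log_2 \binom{2^n}{\CC O(n^2)} \approx n^3/2$; one must make sure the dimension of $\Sym_N$ is taken as $\binom{n+1}{2}$ and not $n^2$, though this only changes the constant and not the $\CC O(n^3)$ conclusion. I do not anticipate any structural obstacle — the content is entirely in \Cref{prop:complexity-bound}, and the corollary is just its specialization — so the proof will be short.
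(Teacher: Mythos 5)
Your proposal is correct and follows the paper's proof essentially verbatim: same identification of $k = \binom{n+1}{2}$, $\ell = 2^n$, $d = n$, same intermediate bound from \Cref{prop:complexity-bound}. The only cosmetic difference is that you bound $\binom{\ell}{k} \le \ell^k$ while the paper uses $\binom{\ell}{k} \le \ell^k / k!$, but both yield $\CC O(n^3)$ since the $n \binom{n+1}{2}$ term dominates.
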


\begin{proof}
A real symmetric matrix is given by \(\binom{n+1}{2}\) coordinates. For
$\CC P = \Set{ \pr{I:\Sigma} : I \subseteq N }$, we evidently have
$\CC C(\CC P) = \PR_N$. The family $\CC P$ contains $2^n$ polynomials of
degree at most~$n$. Plugging into \Cref{prop:complexity-bound} gives
\[
  \dim H^0(\PR_N) \leq \binom{2^n}{\binom{n+1}{2}} \CC O(n)^{\binom{n+1}{2}}.
\]
Using the upper bound $\binom{x}{a} \leq \frac{1}{a!}x^a$
yields $\log_2(\dim H^0(\PR_N)) \leq \CC O(n^3)$.
\end{proof}

The corollary above bounds the sequence \(r_n\) beneath a singly exponential
function in \(n\), while our lower bound on the sequence \(a_n\) is doubly
exponential.

\begin{proposition}
  \label{prop:representable-upper-complexity}
  For all $n$: $\log_2(r_n) \leq \CC O(n^3)$.
\end{proposition}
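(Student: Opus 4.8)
The plan is to show that the number $r_n$ of representable sign patterns is itself bounded by the number of connected components of $\PR_N$, and then invoke \Cref{cor:pr-h0}. The point is that each representable sign pattern $s$ contributes at least one connected component to $\PR_N$: by definition $\PR_N(s)$ is nonempty, and the subspaces $\PR_N(s)$ for distinct representable $s$ are pairwise disjoint and, as noted in the introduction, disconnected from each other inside $\PR_N$. Hence $\PR_N$ has at least $r_n$ connected components, i.e. $r_n \le \dim H^0(\PR_N)$.

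Concretely, first I would recall that $\PR_N = \bigsqcup_s \PR_N(s)$, the union being over representable $s$, and that this is a decomposition into relatively clopen subsets: each $\PR_N(s)$ is the preimage under the continuous (indeed polynomial) principal minor map of a single open orthant of $\BB R^{2^N}$ intersected with $\PR_N$, hence open in $\PR_N$; and its complement in $\PR_N$ is a finite union of other such sets, hence also open, so $\PR_N(s)$ is closed in $\PR_N$ as well. A nonempty relatively clopen set contains at least one connected component of $\PR_N$, and components coming from different $\PR_N(s)$ are distinct. Therefore the number of connected components of $\PR_N$ is at least the number of nonempty pieces, which is exactly $r_n$.

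Combining this with \Cref{cor:pr-h0}, which gives $\log_2(\dim H^0(\PR_N)) \le \CC O(n^3)$, yields
\[
  \log_2 r_n \le \log_2(\dim H^0(\PR_N)) \le \CC O(n^3),
\]
as claimed.

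There is essentially no obstacle here beyond bookkeeping: the only subtlety is making sure that the $\PR_N(s)$ really are relatively clopen in $\PR_N$ (not merely in $\Sym_N$), which follows because over $\PR_N$ the sign of every principal minor is locally constant — this is precisely the statement that $\PR_N$ decomposes into the disconnected pieces $\PR_N(s)$ as recorded in the introduction. Once that is in hand the inequality $r_n \le \dim H^0(\PR_N)$ is immediate and \Cref{cor:pr-h0} finishes the proof.
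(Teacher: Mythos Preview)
Your proposal is correct and follows essentially the same approach as the paper: both establish $r_n \le \dim H^0(\PR_N)$ and then invoke \Cref{cor:pr-h0}. The paper phrases the first step more tersely---$\signs$ is a continuous map to a discrete space, so the cardinality of its image is bounded by the number of connected components of the domain---but this is exactly equivalent to your clopen-decomposition argument.
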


\begin{proof}
  The map $\signs: \PR_N \to \{\spos, \sneg\}^{2^N}$
  is a continuous map to a discrete space, and so the cardinality of the image
  is bounded by the number of connected components of the domain, which is
  bounded by \Cref{cor:pr-h0}.
\end{proof}

\begin{remark}
In~\cite{NelsonBound}, Nelson shows that almost all matroids are non-representable
over \emph{any field} by combining a doubly exponential lower bound for matroids
with a newly derived singly exponential upper bound on representable matroids.
Interestingly, his bound on the logarithm is $\CC O(n^3)$ and matches our bound
from \Cref{prop:representable-upper-complexity}. The same dichotomy of single
vs.\ double exponential growth also holds in the realm of gaussoids with the
same $\CC O(n^3)$ bound; cf.~\cite{ConstructionMethods}.
For combinatorial objects capturing an ``orientation'' such as our admissible sign
patterns, the ground field needs to be ordered and by Tarski's transfer principle
\cite[Section~11.2]{Marshall} only representability over the real numbers needs
to be considered to establish an upper bound. In this case, \cite{B.P.R1996} is
a very versatile tool. It makes no problem to deduce from it analogous $\CC O(n^3)$
upper bounds on the numbers of representable uniform oriented matroids and gaussoids.
\end{remark}

Combining \Cref{thm:Asymptotic} and \Cref{prop:representable-upper-complexity}
we get our main result:

\begin{theorem}
  \label{thm:asymp-zero-prob}
  Let \(p_n\) be the probability that a uniformly chosen admissible sign pattern
  of size \(n\) is representable. Then \(\lim_{n \to \infty} p_n = 0\) and in fact
  \[
    -\log_2(p_n) \geq \Omega(2^n).
  \]
\end{theorem}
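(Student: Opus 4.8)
The plan is to combine the two asymptotic estimates already available in the excerpt. By \Cref{thm:Asymptotic} the number of admissible sign patterns satisfies $\log_2 a_n = 2^{n-1} + \CC O(1)$, so $a_n \ge 2^{2^{n-1} - c}$ for some constant $c$. By \Cref{prop:representable-upper-complexity} the number of representable sign patterns satisfies $\log_2 r_n \le \CC O(n^3)$, i.e.\ $r_n \le 2^{C n^3}$ for some constant $C$. Since every representable sign pattern is admissible, the probability that a uniformly random admissible sign pattern is representable is exactly $p_n = r_n / a_n$.

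The key step is then just to take logarithms and subtract:
\[
  -\log_2(p_n) = \log_2 a_n - \log_2 r_n \ge \big(2^{n-1} - c\big) - C n^3 = 2^{n-1} - C n^3 - c.
\]
Since $2^{n-1}$ dominates the polynomial term $C n^3$ as $n \to \infty$, the right-hand side is $\Omega(2^n)$; more precisely $2^{n-1} - Cn^3 - c \ge \tfrac14 \cdot 2^n$ for all sufficiently large $n$, which gives the stated bound $-\log_2(p_n) \ge \Omega(2^n)$. In particular $-\log_2(p_n) \to \infty$, hence $p_n \to 0$.

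There is essentially no obstacle here: the work has all been front-loaded into \Cref{thm:Asymptotic} (the bijection with $3$-colorings of the hypercube and the result of \cite{HomHamming}) and into \Cref{prop:representable-upper-complexity} (the Basu--Pollack--Roy bound from \cite{B.P.R1996}). The only thing to be mildly careful about is bookkeeping of the implied constants: one should note that the $\CC O(1)$ error term in \Cref{thm:Asymptotic} is a genuine two-sided bound so that the lower bound $a_n \ge 2^{2^{n-1}-c}$ is legitimate, and that the $\CC O(n^3)$ in \Cref{prop:representable-upper-complexity} is an honest upper bound on $\log_2 r_n$ with no hidden dependence on anything but $n$. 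With those in hand the subtraction is immediate and the theorem follows.
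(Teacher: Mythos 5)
Your proof is correct and follows exactly the same approach as the paper, which simply states the theorem as an immediate consequence of \Cref{thm:Asymptotic} and \Cref{prop:representable-upper-complexity} without writing out the arithmetic. Your bookkeeping of the bounds and the observation that $p_n = r_n/a_n$ (since representable implies admissible) fills in precisely the intended short argument.
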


The asymptotic distinction between the admissible sign patterns
and the representable ones appears even when we only consider size \(3\) minors.

\begin{theorem}
  \label{thm:asymp-zero-prob-deg3}
  Let \(\tilde p_n\) be the probability that a uniformly chosen assignment of signs to
  the size 3 subsets of \(\{1, \ldots, n\}\) occurs in a representable sign pattern. Then
  \(\lim_{n \to \infty} \tilde p_n = 0\).
\end{theorem}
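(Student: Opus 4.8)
The plan is to bound the number of sign assignments to the $3$-element subsets of $N$ that occur in a representable sign pattern and to show it is subexponential in $\binom{n}{3}$, which is the total number of such assignments; their ratio then tends to $0$. The naive estimate $\tilde p_n \le r_n / 2^{\binom{n}{3}}$ coming from \Cref{prop:representable-upper-complexity} is too weak: its numerator is only bounded by $2^{\CC O(n^3)}$, with a constant that need not be smaller than the exponent $\binom{n}{3}$ in the denominator. The remedy is to apply the Basu--Pollack--Roy bound \Cref{prop:complexity-bound} not to all $2^{n}$ principal minors, but only to the $\binom{n}{3}$ principal minors of order exactly~$3$. There are only polynomially many of them, and crucially they all have degree~$3$; it is this bounded degree that turns the resulting count from singly exponential in $n$ into merely quasi-polynomial in~$n$.

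In detail, I would put $\CC Q \defas \Set{ \det \Sigma_I : I \subseteq N,\ |I| = 3 }$, a family of $\binom{n}{3}$ polynomials of degree~$3$ in the $\binom{n+1}{2}$ entries of a real symmetric matrix, and consider the hypersurface complement $\CC C(\CC Q) \subseteq \Sym_N$ on which no order-$3$ minor vanishes. Every $\Sigma \in \PR_N$ lies in $\CC C(\CC Q)$, and on each connected component of $\CC C(\CC Q)$ every member of $\CC Q$ has constant sign, so the restriction of $\signs_\Sigma$ to $3$-element sets depends only on which component contains~$\Sigma$. Hence the number of order-$3$ assignments that occur in a representable sign pattern is at most $\dim H^0(\CC C(\CC Q))$, which by \Cref{prop:complexity-bound} (with $\ell = \binom{n}{3}$, $k = \binom{n+1}{2}$, $d = 3$) and the estimate $\binom{x}{a} \le \frac{1}{a!} x^{a}$ used in \Cref{cor:pr-h0} is at most $\binom{\binom{n}{3}}{\binom{n+1}{2}} \cdot 2^{\CC O(n^2)} \le 2^{\CC O(n^2 \log n)}$.

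The conclusion is then immediate: $\tilde p_n \le 2^{\CC O(n^2 \log n)} / 2^{\binom{n}{3}}$, and since $\binom{n}{3} = \Theta(n^3)$ grows strictly faster than $n^{2}\log n$, this ratio tends to~$0$, giving $\lim_{n\to\infty}\tilde p_n = 0$. I do not anticipate a genuine obstacle. The two points that need care are: \emph{(i)} the elementary observation that enlarging $\PR_N$ to $\CC C(\CC Q)$ can only enlarge the set of realized order-$3$ sign vectors, which reduces counting them to counting connected components of a hypersurface complement; and \emph{(ii)} the arithmetic check that a family of polynomially many polynomials of \emph{bounded} degree yields a bound of the form $2^{o(n^3)}$, rather than the useless $2^{\Theta(n^3)}$ that the full set of $2^n$ principal minors would produce. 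An alternative, ``forbidden-configuration'' route would also prove the theorem --- exhibit a single sign assignment to the $3$-subsets of a fixed ground set of size $m$ that extends to no representable sign pattern, and apply it independently to $\lfloor n/m \rfloor$ disjoint $m$-element blocks --- but this requires producing such a configuration explicitly, which the counting argument avoids.
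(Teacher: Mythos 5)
Your proposal is correct and follows essentially the same route as the paper: apply the Basu--Pollack--Roy bound (\Cref{prop:complexity-bound}) to the polynomially many degree-$3$ principal minors only, obtain a $2^{\CC O(n^2\log n)}$ bound on the realized order-$3$ sign vectors via connected components of the corresponding hypersurface complement, and compare against the $2^{\Theta(n^3)}$ total. The only cosmetic difference is that the paper writes $\binom{n+1}{3}$ for the count of order-$3$ minors where you (correctly) write $\binom{n}{3}$; this has no effect on the asymptotics.
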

\begin{proof}
  Let \(c_n\) be the number of connected components of $\CC C(\tilde{\CC P})$
  for $\tilde{\CC P} = \Set{ \pr{I:\Sigma} : I \subseteq N,\, |I| = 3 }$.
  There are only \(\binom{n+1}{3}\) such minors, and so again using
  \Cref{prop:complexity-bound}, we obtain
  \[
    c_n \leq \binom{\binom{n+1}{3}}{\binom{n+1}{2}} c_0^{\binom{n+1}{2}},
  \]
  for some constant $c_0$. With the same approximations as in \Cref{cor:pr-h0}
  we obtain
  \[
    \log c_n \leq \CC O(n^2\log(n))
  \]
  but the number of assignments of signs to size 3 subsets of \(\{1, \ldots, n\}\) is \(2^{\Omega(n^3)}\).
\end{proof}

We note that every assignment of signs to subsets of size $3$ occurs
in an \emph{admissible} sign pattern, by the construction of the lower
bound in \eqref{eq:Bounds}.

The best lower bound for \(r_n\) that we know of is as follows.
\begin{remark}
  \label{rem:representable_lower_construction}
  Given a vector of prescribed signs \((s(K))_{K: |K| = 2}\) for the
  size \(2\) principal minors, the matrix \((c_{ij})\)
  defined by
  \[
    c_{ij} =
    \begin{cases}
      1 & \text{ if }i = j \\
      2 & \text{ if }s(ij) = \sneg \\
      0 & \text{ otherwise}
    \end{cases}
  \]
  has principal minors of those signs by construction, and a small perturbation
  of the entries will give a principally regular matrix. Thus,
  $\log_2(r_n) \geq \Omega(n^2)$.
\end{remark}

Finally, we make a remark about the number of equivalence classes under the
hyperoctahedral group. Let \(a'_n\) be the number of equivalence classes of
size \(n\) sign patterns under the hyperoctahedral group action, and \(r'_n\)
the number of representable equivalence classes.

\begin{remark}
Since $|\HypGrp_n| = 2^n n!$, we have $\log_2 |\HypGrp_n| \le \CC O(n \log n)$,
and both \(\log_2(a_n)\) and \(\log_2(r_n)\) are at least \(\Omega(n^2)\),
the exponential growth rates are unaffected by quotienting by the group action.
That is to say, \(\log_2(a'_n) = \Theta(\log_2(a_n))\) and
\(\log_2(r'_n) = \Theta(\log_2(r_n))\).
\end{remark}

\section{Representability and topology}
\label{sec:Representability}

We now turn to questions about representability and the topology of the
spaces of representations. As in \Cref{sec:PR3} we work in the affine space
$\Sym_N$ where a sign pattern $s$ gives rise to the semialgebraic set~$\PR_N(s)$
in $\Sym_N$. This set is defined by the strict determinantal inequalities
$s(K) \cdot \pr{K:\Sigma} > 0$ for all $K \subseteq N$, and $s$ is
representable if and only if $\PR_N(s)$ (or, equivalently, $\hat{\PR}_N(s)$)
is non-empty. Since these sets are open, the existence of a real point in
them implies existence of a rational point. On our website we provide a
database of exact rational points certifying all representability claims
made here.

\subsection{Representability for \texorpdfstring{$n \le 5$}{n <= 5}}
\label{sec:Rep5}

To check if a sign pattern is representable, one may naïvely generate
random matrices with rational entries having small numerators and
denominators (and fixed diagonal entries via \Cref{prop:Scaling}).
This strategy, which we implemented in \TT{sagemath} \citesoft{sagemath},
is surprisingly effective: it finds simple rational representations
for all hyperoctahedral orbits on $n \le 4$ with little effort.
For $n=5$, it yields witnesses for the representability of all but one
of the $434$ $\HypGrp_5$-orbits. These results are summarized in column
four of \Cref{tab:Signs}. We conjecture that the last remaining equivalence
class is non-representable. This final class is represented by the
sign pattern $s_*$ shown in \Cref{fig:last-pattern}.

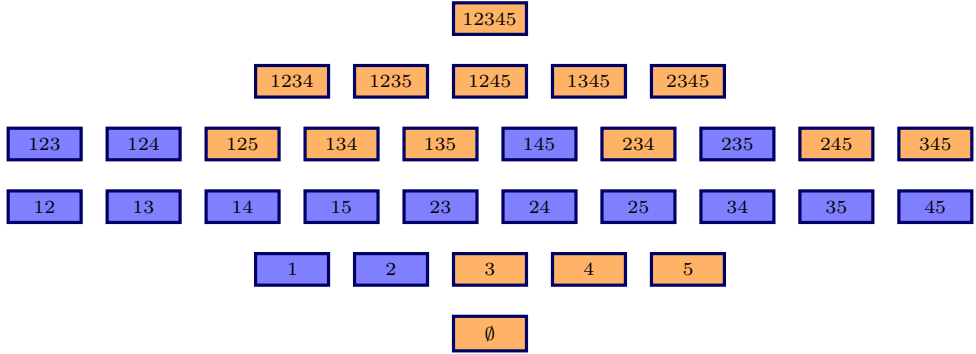
\begin{figure}[ht]
  \centering
  \begin{tikzpicture}
    \tikzset{box/.style={font=\tiny, draw=blue!40!black, very thick, inner sep=1.3mm, text width=7mm, align=center}}
    \tikzset{spos/.style={box, fill=orange!60!white}}
    \tikzset{sneg/.style={box, fill=blue!50!white}}
    \tikzset{every matrix/.style={column sep=3mm}}

    \matrix (m5) [matrix of nodes]{ |[spos]| 12345 \\ };
    \matrix (m4) [matrix of nodes, below=1mm of m5]{ |[spos]| 1234 & |[spos]| 1235 & |[spos]| 1245 & |[spos]| 1345 & |[spos]| 2345 \\ };
    \matrix (m3) [matrix of nodes, below=1mm of m4]{ |[sneg]| 123 & |[sneg]| 124 & |[spos]| 125 & |[spos]| 134 & |[spos]| 135 & |[sneg]| 145 & |[spos]| 234 & |[sneg]| 235 & |[spos]| 245 & |[spos]| 345 \\ };
    \matrix (m2) [matrix of nodes, below=1mm of m3]{ |[sneg]| 12 & |[sneg]| 13 & |[sneg]| 14 & |[sneg]| 15 & |[sneg]| 23 & |[sneg]| 24 & |[sneg]| 25 & |[sneg]| 34 & |[sneg]| 35 & |[sneg]| 45 \\ };
    \matrix (m1) [matrix of nodes, below=1mm of m2]{ |[sneg]| 1 & |[sneg]| 2 & |[spos]| 3 & |[spos]| 4 & |[spos]| 5 \\ };
    \matrix (m0) [matrix of nodes, below=1mm of m1]{ |[spos]| $\emptyset$ \\ };
  \end{tikzpicture}
  \caption{An admissible sign pattern \(s_*\) of size 5 for which we have no
    representation, depicted as a coloring of the lattice of subsets of
    \(\{1, \ldots, 5\}\). Positive signs are in orange, and negative in blue.}
  \label{fig:last-pattern}
\end{figure}

We note that $s_*$ is uniquely determined by its values on subsets of odd
order and the assumption that it is admissible. Indeed, the signs assigned
to singletons and \eqref{eq:diamond} imply the negativity of $s_*(ij)$,
with $i \in 12$ and $j \in 345$. The negativity of $s_*(12)$ follows from
the values at $13$, $123$ and~$1$. Similar proofs can be found for the
negativity of $s_*(34)$, $s_*(35)$ and $s_*(45)$ showing that the values
of all principal minors of order two are implied by the values at orders
one and three via~\eqref{eq:diamond}. The positivity of $s_*(1234)$
follows directly from the values at $123$, $234$ and $23$; and~similar
proofs can be found for the remaining $4$-minors.
As in \Cref{sec:PR3}, for representability testing it suffices to consider
the restricted space $\hat{\PR}_5(s_*)$ in which the diagonals are fixed
to~$\pm1$. The~previous discussion shows that $\hat{\PR}_5(s_*)$ can
equivalently be described by a polynomial system in 10~variables with
only 10~sign constraints on order three principal minors and the positivity
of the determinant.

Despite these reductions, we can neither find a representation computationally
nor prove that there is none. In the search for a representation, we have
employed the polynomial programming solver from the SCIP suite \citesoft{SCIP}.
Unfortunately, during the solving of this problem (in its most straightforward
formulation), a matrix becomes numerically singular and SCIP aborts.
On the other hand, cylindrical algebraic decomposition (CAD) is an
exact symbolic algorithm which, given a semialgebraic set, either finds a
point inside or conclusively determines that it is empty. We have used the
implementation in \TT{Mathematica} \citesoft{Mathematica} via its
\TT{FindInstance} function but could not find a formulation on which the
algorithm terminates.

We can offer one more insight. The sign pattern $s_*'$ obtained from $s_*$
by flipping the sign of $12345$ is still admissible and inequivalent to
$s_*$ under the hyperoctahedral group. It is therefore representable
according to our computations. A rational representation is given by
\[
  \Sigma_*' = \begin{pmatrix}
    -1 & \sfrac{-25}{17} & \sfrac{10}{27} & \sfrac{-9}{7} & \sfrac{17}{22} \\
    \sfrac{-25}{17} & -1 & \sfrac{-3}{7} & \sfrac{-7}{8} & -1 \\
    \sfrac{10}{27} & \sfrac{-3}{7} & 1 & \sfrac{-22}{7} & \sfrac{-7}{4} \\
    \sfrac{-9}{7} & \sfrac{-7}{8} & \sfrac{-22}{7} & 1 & \sfrac{8}{5} \\
    \sfrac{17}{22} & -1 & \sfrac{-7}{4} & \sfrac{8}{5} & 1
  \end{pmatrix}.
\]
This shows that the partial sign pattern $s_*(K)$, for all $|K| \le 4$,
is consistent and hence if $s_*$ is not representable, then the
inconsistency arises from the constraint $\det \Sigma > 0$.

\begin{conjecture} \label{conj:minimal-forbidden-minor}
For any $\Sigma \in \PR_5$, if $\signs_\Sigma$ agrees with $s_*$ on all
sets of sizes $1$ and $3$, then $\det \Sigma < 0$. In particular, $s_*$
is not representable.
\end{conjecture}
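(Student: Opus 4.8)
The plan is to turn the statement into an explicit semialgebraic emptiness problem in few variables, extract the combinatorial obstruction that Cauchy interlacing provides, and then complete the argument with a Positivstellensatz certificate; the degree of that certificate is where I expect the real difficulty. By \Cref{prop:Scaling} we may work in the reduced space and fix the diagonal of $\Sigma$ to the values forced by $s_*$, that is $\sigma_{11} = \sigma_{22} = -1$ and $\sigma_{33} = \sigma_{44} = \sigma_{55} = 1$ (alternatively, apply the hyperoctahedral swap of \Cref{lemma:PositiveHypGrp} to make all singletons positive). As observed right after \Cref{fig:last-pattern}, the values of $s_*$ at orders $2$ and $4$ are forced from those at orders $1$ and $3$ by the diamond axiom \eqref{eq:diamond}, so the claim reduces to showing that the set of symmetric $5 \times 5$ matrices with this diagonal, with $\det \Sigma > 0$, and with the ten sign conditions $s_*(K)\,\pr{K:\Sigma} > 0$ for $|K| = 3$, is empty.

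The next step is to feed this through the inertia dictionary $s_*(K) = (-1)^{n_-(\Sigma_K)}$ and Cauchy interlacing along nested subsets. Since every pair $ij$ has $s_*(ij) = \sneg$, each $2 \times 2$ principal submatrix has $n_-(\Sigma_{ij}) = 1$; interlacing then forces $n_-(\Sigma_K) \in \Set{1,2}$ for every triple $K$, which the order-$3$ signs of $s_*$ refine to $n_-(\Sigma_K) = 1$ on the four negative triples and $n_-(\Sigma_K) = 2$ on the other six; one further round of interlacing gives $n_-(\Sigma_K) = 2$ for every $4$-set $K$; and a last round gives $n_-(\Sigma) \in \Set{2,3}$. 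Thus interlacing alone does \emph{not} decide the sign of $\det \Sigma$: the option $n_-(\Sigma) = 3$ is genuinely realized --- it corresponds to the representable pattern $s_*'$ --- and the entire content of the conjecture is to exclude the option $n_-(\Sigma) = 2$, equivalently to prove $\det \Sigma < 0$.

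The decisive step is to produce an algebraic certificate of this implication. The square trinomials \eqref{eq:Mdiamond}, applied not only at order two but at all orders (Desnanot--Jacobi for $\Sigma$ and for each of its submatrices), relate $\det \Sigma$, the lower principal minors, and squares of almost-principal minors. I~would chain these identities so as to write a product of known-positive principal minors times $-\det\Sigma$ as a sum of squares plus a nonnegative combination --- with sum-of-squares coefficients --- of products of the known-positive quantities $s_*(K)\,\pr{K:\Sigma}$ with $|K| \le 4$; under the sign hypotheses such an identity forces $\det\Sigma < 0$. A~certificate of this shape exists by the Positivstellensatz \cite[Proposition~4.4.1]{RealAlgebra} exactly because the implication is true; this is the general mechanism already sketched in the \nameref{sec:Background}. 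Concretely, one fixes a degree bound, solves the resulting semidefinite feasibility problem for the sum-of-squares multipliers numerically, rounds the solution to exact rationals, and verifies the polynomial identity symbolically.

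The hard part is the degree. That both cylindrical algebraic decomposition and the polynomial programming solver fail on the $10$-variable formulation is strong evidence that no low-degree certificate exists, so the semidefinite program will be large and badly conditioned, and the Positivstellensatz offers no effective bound to aim for. A proof will most likely have to exploit the rigidity uncovered in the interlacing step --- the inertias of all proper submatrices being pinned down severely restricts which almost-principal minors can appear in the chain of square trinomials, which one hopes keeps the certificate small --- or else dispense with the certificate search in favour of a bespoke inertia argument finer than Cauchy interlacing, tracking how the single negative eigenvalue of a pair submatrix such as $\Sigma_{12}$ must propagate through the lattice of nested submatrices in a way incompatible with $n_-(\Sigma) = 2$.
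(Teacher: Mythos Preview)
This statement is a \emph{conjecture} in the paper, not a theorem: the paper does not supply a proof, only the remark immediately following it that the Positivstellensatz would yield a certificate of the form $-q\cdot\det\Sigma = p + h^2$ with $p,q$ in the preorder generated by the $s_*(I)\cdot\pr{I:\Sigma}$ for $|I|\in\{1,3\}$. Your proposal is therefore not competing against any proof in the paper; it is, in fact, an expanded version of exactly the approach the paper gestures at --- reduce to the diagonal-normalized problem, note that orders $2$ and $4$ are forced by the diamond axiom, and search for a Positivstellensatz/SOS certificate --- together with a useful inertia analysis the paper does not spell out.

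But your write-up is a \emph{plan}, not a proof. You correctly identify that Cauchy interlacing leaves both $n_-(\Sigma)=2$ and $n_-(\Sigma)=3$ open, and you correctly say that the whole content of the conjecture is to exclude $n_-(\Sigma)=2$. From that point on you offer only a strategy (chain the square trinomials, solve an SDP, round, verify) and an honest acknowledgement that you do not know the degree bound and that the computational evidence is discouraging. That is precisely the gap: no certificate is exhibited, no alternative inertia argument is carried out, and the Positivstellensatz only guarantees that a certificate exists \emph{if} the conjecture is true --- it does not by itself establish the conjecture. So as a proof this is incomplete in the same way the paper is; as a proof \emph{proposal} it is sound and well-aligned with the paper's own suggestion.
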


The number of negative eigenvalues of $\Sigma$ depends only on its sign
pattern, since it equals the number of sign changes in the principal minors
along the complete flag $\emptyset, 1, 12, 123, \dots, N$.
Thus the claim $\det \Sigma < 0$ in \Cref{conj:minimal-forbidden-minor}
is equivalent to $\Sigma$ having exactly $3$ negative eigenvalues.

The~Positivstellensatz \cite[Proposition~4.4.1]{RealAlgebra} implies that
\Cref{conj:minimal-forbidden-minor} is true if and only if there exists a
polynomial identity $-q \cdot \det \Sigma = p + h^2$ where $p,q$ are elements
of the preorder generated by the polynomials $s_*(I) \cdot \pr{I:\Sigma}$,
for $I \subseteq 12345$ with $|I| \in \{1,3\}$, in $\BB Z[\Sigma]$ and $h$
is a product of principal minors.

\subsection{Topology for reducible sign patterns}

The representability of a sign pattern $s$ gives the most basic geometric
information about $\PR_N(s)$: whether it is empty or not. At the next step,
one may be interested in the topological type of~$\PR_N(s)$. We have already
seen in \Cref{sec:ActionRep} that equivalent sign patterns under any of the
given group actions have homeomorphic representation spaces, and that the
number of connected components can be bounded in terms of the minors of~$s$.
We have one more result of this type:

\begin{definition}
A sign pattern $s$ over $N$ is \emph{reducible} if there exists a partition
$N = KL$ such that $s(I) = s|_K(I \cap K) \cdot s|_L(I \cap L)$ for all $I
\subseteq N$; otherwise $s$ is \emph{irreducible}. A \emph{decomposition}
of $s$ is a partition $N = K_1 \cdots K_m$ such that $s(I) = \prod_{i=1}^m
s|_{K_i}(I \cap K_i)$.
\end{definition}

It is easy to see that if $N = K_1 \cdots K_m = L_1 \cdots L_p$ are two
valid decompositions, then the common refinement of these partitions is
also a valid decomposition. This shows that there is a unique minimal
decomposition in which all factors $s|_{K_i}$ are necessarily irreducible.

\begin{theorem}
Let $s$ be a sign pattern over $N$ and $N = K_1 \cdots K_m$ its irreducible
decomposition. Then $s$ is representable if and only if each $s|_{K_i}$ is
representable. In this case $\dim H^0(\PR_N(s)) \ge \prod_{i=1}^m \dim
H^0(\PR_{K_i}(s|_{K_i}))$.
\end{theorem}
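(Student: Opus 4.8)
The plan is to leverage the block-diagonal structure that a reducible sign pattern forces on any representing matrix, reducing the statement to the one-factor case via a product construction. First I would treat the case $m=2$, writing $N = KL$ with $s(I) = s|_K(I\cap K)\cdot s|_L(I\cap L)$; the general case follows by induction on $m$ using the associativity of this product structure. For the ``only if'' direction, suppose $\Sigma \in \PR_N(s)$. The sign pattern $s$ being reducible along $KL$ says that $\signs_\Sigma(I) = \signs_\Sigma(I\cap K)\cdot\signs_\Sigma(I\cap L)$ for all $I$. Since $\Sigma_K \in \PR_K$ and $\Sigma_L \in \PR_L$ by the minor-closedness established in \Cref{sec:Constructions}, their restrictions $\signs_\Sigma|_K$ and $\signs_\Sigma|_L$ are representable; and by \Cref{def:Minors} these are exactly $s|_K$ and $s|_L$. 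So each factor is representable.

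For the ``if'' direction, suppose $\Sigma_K \in \PR_K(s|_K)$ and $\Sigma_L \in \PR_L(s|_L)$. Form the block-diagonal matrix $\Sigma = \Sigma_K \oplus \Sigma_L \in \Sym_N$. For any $I \subseteq N$, the principal submatrix $\Sigma_I$ is itself block-diagonal with blocks $(\Sigma_K)_{I\cap K}$ and $(\Sigma_L)_{I\cap L}$, so $\pr{I:\Sigma} = \pr{I\cap K:\Sigma_K}\cdot\pr{I\cap L:\Sigma_L}$, which is nonzero; hence $\Sigma \in \PR_N$. Taking signs, $\signs_\Sigma(I) = s|_K(I\cap K)\cdot s|_L(I\cap L) = s(I)$, so $s$ is representable. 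This also sets up the connected-components bound: the map $(\Sigma', \Sigma'') \mapsto \Sigma' \oplus \Sigma''$ is a continuous injection $\PR_K(s|_K) \times \PR_L(s|_L) \hookrightarrow \PR_N(s)$, and for a product space $\dim H^0(X \times Y) = \dim H^0(X)\cdot \dim H^0(Y)$; since the image is a (closed) union of connected components of $\PR_N(s)$ — any path in $\PR_N(s)$ starting at a block-diagonal point stays block-diagonal because leaving the block form would flip some relevant product sign — we get $\dim H^0(\PR_N(s)) \ge \dim H^0(\PR_K(s|_K))\cdot \dim H^0(\PR_L(s|_L))$. Iterating over the full irreducible decomposition gives the stated product bound.

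The main obstacle I anticipate is the claim that the block-diagonal locus is a union of connected components of $\PR_N(s)$, which is what makes the components of the product embed honestly rather than merely inject. The subtlety is that $\PR_N(s)$ may contain non-block-diagonal matrices representing $s$ (reducibility of $s$ does not force reducibility of every representative), so the block-diagonal locus is not all of $\PR_N(s)$; I need that it is nonetheless open and closed in $\PR_N(s)$. Openness is easy. For closedness one argues: the block-diagonal matrices in $\PR_N(s)$ are the common zero set inside $\PR_N(s)$ of the off-block entries $\sigma_{ij}$ with $i \in K$, $j \in L$, which is evidently closed — but that overshoots, since that locus might be empty or might fail to meet every component. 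The cleaner route, which I would take, is simply to note that $\PR_K(s|_K) \times \PR_L(s|_L) \to \PR_N(s)$ is a continuous injection, so the number of components of the source is at most the number of components of the target (each component of the source lands in a single component of the target, and distinct components cannot merge because the map is injective and the source components are themselves path-components); this already yields $\dim H^0(\PR_N(s)) \ge \dim H^0(\PR_K(s|_K))\cdot\dim H^0(\PR_L(s|_L))$ without needing the block-diagonal locus to be closed. This mirrors exactly the monotonicity argument in the proof of \Cref{thm:ActionsTop}, just with a product in place of a single projection, so the write-up should be short.
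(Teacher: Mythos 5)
Your treatment of the representability equivalence is correct and matches the paper: restrictions of a representation give representations of the factors, and block-diagonal assembly goes the other way.

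However, the connected-components bound has a genuine gap. After correctly abandoning the claim that the block-diagonal locus is a union of connected components of $\PR_N(s)$ (it is closed but not open, since $\PR_N(s)$ is open and perturbing off-block entries keeps you inside it), you retreat to the argument that the block-diagonal inclusion
\[
\PR_K(s|_K)\times\PR_L(s|_L)\;\hookrightarrow\;\PR_N(s)
\]
is a continuous injection and that ``distinct components cannot merge because the map is injective.'' That inference is false: a continuous injection can carry two disjoint components of the source into a single component of the target, e.g.\ the inclusion $\{0,1\}\hookrightarrow[0,1]$. Injectivity on points says nothing about injectivity on $\pi_0$. So the claimed bound $\dim H^0(\PR_N(s))\ge\prod_i\dim H^0(\PR_{K_i}(s|_{K_i}))$ does not follow from what you wrote. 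This is also not a faithful mirror of \Cref{thm:ActionsTop}: that argument uses a continuous \emph{surjection} onto the minor's representation space, and for surjections the induced map on connected components is genuinely surjective.

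The fix is to run the map the other way, as the paper does. Consider
\[
\PR_N(s)\;\longrightarrow\;\prod_{i=1}^m\PR_{K_i}(s|_{K_i}),\qquad \Sigma\mapsto(\Sigma_{K_1},\dots,\Sigma_{K_m}).
\]
This is continuous, well-defined by minor-closedness and $\signs_{\Sigma_{K_i}}=s|_{K_i}$, and surjective precisely by your block-diagonal construction. A continuous surjection induces a surjection on connected components, hence $\dim H^0(\PR_N(s))\ge\dim H^0\bigl(\prod_i\PR_{K_i}(s|_{K_i})\bigr)=\prod_i\dim H^0(\PR_{K_i}(s|_{K_i}))$, and this really is the same pattern as \Cref{thm:ActionsTop}. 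Alternatively, you could salvage the inclusion argument by observing that this projection is a continuous retraction of it, so the induced map on $\pi_0$ is a split injection --- but at that point you are using the surjection anyway.
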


\begin{proof}
Clearly if $s$ is representable then so are its restrictions. Conversely,
let $\Sigma_i \in \PR_{K_i}(s|_{K_i})$. Then the block-diagonal matrix with
diagonal blocks $\Sigma_i$ is a representation for~$s$.

The same idea also gives the bound on the number of connected components.
Consider the map $\PR_N(s) \to \bigtimes_{i=1}^m \PR_{K_i}(s|_{K_i})$
which sends $\Sigma \mapsto (\Sigma_{K_i} : i = 1, \dots, m)$.
By the block-diagonal matrix construction, this is a (continuous) surjection
inducing a surjection from connected components of the domain onto those of
the image.
\end{proof}

\begin{conjecture} \label{conj:Contractible}
Every connected component of $\PR_N$ is contractible.
\end{conjecture}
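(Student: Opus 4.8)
The plan is to induct on $n = |N|$, exhibiting each connected component of $\PR_N$ as a fibration with contractible fiber over a contractible base. Since $\PR_N$ is the disjoint union of the mutually disconnected pieces $\PR_N(s)$, a component $C$ lies in a single $\PR_N(s)$ with $s$ representable, and by \Cref{prop:Scaling} it is homeomorphic to a component $\hat C$ of $\hat{\PR}_N(s)$ times the (contractible) space $P_N$; so it suffices to contract $\hat C$. Fix $n \in N$ and consider the bordering map $\Sigma \mapsto \Sigma_{N\setminus n}$; since $\pr{K:\Sigma} = \pr{K:\Sigma_{N\setminus n}}$ for $K \subseteq N\setminus n$, it restricts to a continuous map $\pi\colon \hat{\PR}_N(s) \to \hat{\PR}_{N\setminus n}(s\del n)$ which carries $\hat C$ into a single connected component $\hat C'$, contractible by the inductive hypothesis. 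Writing $\Sigma$ in block form with corner block $\Sigma' \defas \Sigma_{N\setminus n}$, off-diagonal vector $v = (\sigma_{jn})_{j\in N\setminus n}$ and diagonal entry $\sigma_{nn} = \pm1$ of sign $s(n)$, Schur's formula yields the polynomial identity $\pr{nK:\Sigma} = \sigma_{nn}\,\pr{K:\Sigma} - v_K^\T\,\adj(\Sigma'_K)\,v_K$ for every $K \subseteq N\setminus n$. Hence the fiber of $\pi$ over $\Sigma'$ is a connected component of the complement, in $\BB R^{N\setminus n}$, of the arrangement of $2^{n-1}$ real quadrics $v_K^\T\,\adj(\Sigma'_K)\,v_K = \sigma_{nn}\det(\Sigma'_K)$, and $\pi^{-1}(\Sigma')\cap\hat C$ is one of its connected components. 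If $\pi|_{\hat C}$ is moreover a fibration with contractible fibers, then $\hat C$ is fiber-homotopy equivalent to a trivial bundle over $\hat C'$ and hence contractible, closing the induction.

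Two things therefore have to be established. First, $\pi|_{\hat C}\colon \hat C \to \hat C'$ should be a locally trivial fiber bundle (hence, in particular, surjective). The inertia of each $\Sigma_K$ equals the number of sign changes along any maximal chain $\emptyset \subset \dots \subset K$ in $2^K$, so the signature of each quadratic form $v_K \mapsto v_K^\T\adj(\Sigma'_K)v_K$ is constant over $\hat{\PR}_{N\setminus n}(s\del n)$; all fibers of $\pi$ therefore have the same type (the same collection of quadric signatures and required signs), and Hardt's semialgebraic triviality theorem should upgrade this to local triviality --- but making the bundle structure and the surjectivity onto $\hat C'$ precise is the first point to nail down. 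Second, and this is the crux, one must show that such a fiber --- a connected component of an arrangement complement of quadrics \emph{coming from a symmetric matrix} --- is contractible; admissibility alone is not enough, and the square-trinomial identities \eqref{eq:Mdiamond} relating these quadrics must enter. My proposed handle is flag-adapted coordinates: in the $LDL^\T$ factorisation $\Sigma = LDL^\T$ along the flag $\emptyset \subset \{1\} \subset \dots \subset N$, bordering by $n$ amounts to a free choice of the last off-diagonal row $\ell = (\ell_{n1},\dots,\ell_{n,n-1})$ and of $d_n \neq 0$, and for $J = \{1,\dots,j\}$ one computes $\pr{Jn:\Sigma} = \pr{J:\Sigma}\,\big(d_n + \sum_{j<m<n}\ell_{nm}^2\,d_m\big)$ with $\pr{J:\Sigma} = d_1\cdots d_j$. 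The conditions imposed by these flag-compatible minors form a nested chain of sign requirements on the partial sums $g_j \defas d_n + \sum_{j<m<n}\ell_{nm}^2\,d_m$, and on a connected component each one becomes a single open interval or half-line for the corresponding $\ell_{nm}$; this exhibits the part of the fiber cut out by the flag minors as an iterated interval/half-line bundle, hence contractible. It then remains to show, using \eqref{eq:Mdiamond}, that the remaining non-flag-compatible minor inequalities neither disconnect nor puncture this piece --- and that is where I expect the genuine difficulty to lie.

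A more uniform alternative is to contract each component directly by a gradient flow. Consider $\Phi(\Sigma) \defas \|\Sigma\|_F^2 - \sum_{\emptyset \neq K \subseteq N}\log|\pr{K:\Sigma}|$, which is bounded below and proper on $\PR_N$: the logarithmic terms blow up at the hypersurfaces $\pr{K:\Sigma}=0$ and are dominated by $\|\Sigma\|_F^2$ at infinity, so every connected component of $\PR_N$ is exhausted by compact sublevel sets of $\Phi$. If one can prove that the Hessian of $\Phi$ is positive definite on all of $\PR_N$ --- or merely that $\Phi$ is a Morse function whose critical points are all local minima --- then the negative gradient flow deformation-retracts each component onto its unique critical point, which proves contractibility. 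The substance of this Hessian estimate is once again the log-submodularity of $K \mapsto \pr{K:\Sigma}$ underlying the Koteljanskii inequalities and \eqref{eq:Mdiamond}, so that identity should be the engine of the proof in either approach; it is the main obstacle, since it is not clear that the Hessian stays positive near the boundary strata where some $\Sigma_K$ is nearly singular and indefinite. The base cases $n \le 2$ are immediate, and the case $n = 3$ can be verified from the explicit description of the pieces $\hat{\PR}_3(s)$ in \Cref{sec:PR3}.
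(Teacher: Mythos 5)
This statement is labelled a \emph{conjecture} in the paper; there is no proof to compare your attempt against, and your proposal does not close the gap either. The paper only establishes it in two special cases: for $n=3$, by inspection of the explicit decomposition underlying \Cref{fig:Reps}, and for completely reducible sign patterns, where \Cref{thm:StarShape} shows the stronger claim that $\PR_N(s)$ is star-shaped with respect to a representing diagonal matrix $D$, via a term-by-term sign analysis of the multilinear expansion of $\det\bigl(tD+(1-t)\Sigma\bigr)$. That argument uses complete reducibility and the diagonality of $D$ in an essential way and gives no handle on the general case; it is a different and more elementary route than either of your strategies.

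Your two strategies each contain exactly the obstruction you yourself flag, and neither is resolved. On the fibration route: the border map $\pi\colon\hat{\PR}_N(s)\to\hat{\PR}_{N\setminus n}(s\del n)$ is not clearly a locally trivial bundle --- Hardt triviality gives local triviality only over the pieces of a further, uncontrolled semialgebraic partition of the base, which may cut through $\hat C'$ --- and even granting a bundle structure, contractibility of a fiber (a region in $\BB R^{n-1}$ carved out by $2^{n-1}$ quadric sign conditions) is an open claim of the same flavor as the conjecture itself; your $LDL^\T$ computation is correct but handles only the flag-compatible minors and leaves the non-flag ones, which you rightly identify as the crux, untouched. On the gradient-flow route: the Hessian of $-\log|\det X|$ in symmetric $X$ is $H\mapsto\mathrm{tr}(X^{-1}HX^{-1}H)$, which is genuinely indefinite as soon as $X$ is --- for instance $X=\diag(1,-1)$ with $H$ the off-diagonal swap gives $\mathrm{tr}(X^{-1}HX^{-1}H)=-2$ --- so near a wall $\pr{K:\Sigma}=0$ where $\Sigma_K$ is indefinite the barrier term contributes unbounded negative curvature in some direction and the fixed quadratic $\|\Sigma\|_F^2$ cannot restore convexity. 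Thus $\Phi$ need not be Morse with only minima, and the proposed deformation retraction is unavailable without a substantially new idea. The conjecture remains open both in the paper and after your proposal.
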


The conjecture implies that the homotopy type of $\PR_N(s)$ depends only
on the number of its connected components. It is true for $n=3$ as can be
seen from \Cref{fig:Reps}. It is also true in another special case. Say
that \(s\) is \emph{completely reducible} if it decomposes into sets of
size \(1\).

\begin{theorem} \label{thm:StarShape}
  A sign pattern $s$ is completely reducible if and only if it is
  representable by a diagonal matrix. In this case $\PR_N(s)$ is a
  star-shaped domain (in particular contractible).
\end{theorem}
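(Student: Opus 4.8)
The plan is to prove the two directions of the equivalence and then establish star-shapedness. For the forward direction, suppose $s$ is completely reducible, so $N = K_1 \cdots K_n$ with each $|K_i| = 1$; then $s(I) = \prod_{i=1}^n s|_{K_i}(I \cap K_i)$. Each one-element restriction $s|_{\{i\}}$ is a sign pattern on a singleton, determined by the single value $s(i) \in \{\spos, \sneg\}$, and is trivially representable by the $1 \times 1$ matrix $(s(i))$. By the block-diagonal construction in the preceding theorem (on irreducible decompositions), the diagonal matrix $D = \diag(s(1), \dots, s(n))$ then represents $s$: indeed $\pr{I:D} = \prod_{i \in I} s(i)$, and one checks directly that this equals $s(I)$ using the decomposition formula together with $s|_{\{i\}}(I \cap \{i\}) = s(i)$ when $i \in I$ and $= \spos$ otherwise. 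For the reverse direction, if $\Sigma = \diag(d_1, \dots, d_n)$ is diagonal with all $d_i \neq 0$ (which is forced by $\Sigma \in \PR_N$), then $\pr{I:\Sigma} = \prod_{i \in I} d_i$, so $s(I) = \signs_\Sigma(I) = \prod_{i \in I} \sgn(d_i) = \prod_{i=1}^n s|_{\{i\}}(I \cap \{i\})$, exhibiting the decomposition into singletons.

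For the topological claim, I would describe $\PR_N(s)$ explicitly. Writing $\Sigma = (\sigma_{ij})$, the constraints defining $\PR_N(s)$ are $s(K) \cdot \pr{K:\Sigma} > 0$ for all $K \subseteq N$. I claim the point $\Sigma_0 = \diag(s(1), \dots, s(n))$ is a star center: for any $\Sigma \in \PR_N(s)$ and $t \in [0,1]$, the matrix $\Sigma_t \defas (1-t)\Sigma_0 + t\Sigma$ lies in $\PR_N(s)$. The key observation is that on the segment the diagonal entries behave well: $(\Sigma_t)_{ii} = (1-t)s(i) + t\sigma_{ii}$, and since $s(i) = \sgn(\sigma_{ii})$ both endpoints have the same sign, so $(\Sigma_t)_{ii}$ never vanishes and keeps that sign for all $t \in [0,1]$; meanwhile $(\Sigma_t)_{ij} = t\sigma_{ij}$ for $i \neq j$ just scales linearly. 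The plan is to show that each principal minor $\pr{K:\Sigma_t}$, viewed as a polynomial in $t$, is nonzero on $[0,1]$ and has the correct sign. One clean way: apply the scaling action of \Cref{prop:Scaling} to reduce to the reduced space $\hat{\PR}_N(s)$, or more directly expand $\pr{K:\Sigma_t}$ — the leading behaviour at $t=0$ is the diagonal product $\prod_{i \in K}(1-t)s(i)$-type term, and one argues by induction on $|K|$ using the diamond identity \eqref{eq:Mdiamond} or a direct determinant expansion that no sign change can occur.

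The main obstacle will be the last step: verifying that $\pr{K:\Sigma_t} \neq 0$ for all $t \in [0,1]$. A priori this is a univariate polynomial of degree $|K|$ and could have roots in $(0,1)$ even though its endpoint values $\pr{K:\Sigma_0}$ and $\pr{K:\Sigma}$ are both nonzero with sign $s(K)$. I expect the resolution to come from writing $\pr{K:\Sigma_t}$ in a factored or monotone-friendly form. The cleanest argument I can see: reparametrize using the diagonal-congruence action $D \Sigma_t D$ to normalize the diagonal of $\Sigma_t$ to the constant $\pm 1$ pattern (legitimate since the diagonal signs are constant along the segment), which replaces $\Sigma_t$ by a matrix whose diagonal is fixed and whose off-diagonal entries are $t\sigma_{ij}/\sqrt{|(\Sigma_t)_{ii}(\Sigma_t)_{jj}|}$; then argue that scaling all off-diagonal entries of a principally regular matrix by a common factor in $[0, \tau]$ (for appropriate reparametrization) cannot change signs of principal minors, via the observation that at $t$ slightly above $0$ every $\pr{K:\cdot}$ has the sign of its diagonal product $\prod_{i\in K} s(i)$, combined with an induction: if $\pr{K:\Sigma_{t_0}} = 0$ for a minimal such $t_0 > 0$, then all smaller minors are still nonzero with the right signs at $t_0$, and the diamond identity \eqref{eq:Mdiamond} applied to $(ij|K \setminus ij)$ with the first vanishing minor forces a contradiction between the signs dictated by $s$ being admissible and the square on the left-hand side being $\ge 0$. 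Working out which diamond to use, and confirming the base case, is the technical heart; everything else is routine.
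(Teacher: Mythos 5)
Your proof of the equivalence (completely reducible $\Leftrightarrow$ diagonal representation) is fine and matches the paper. The issue is the star-shapedness claim, where you correctly choose the star center $D = \diag(s(1),\dots,s(n))$ and correctly identify the obstacle — showing $\pr{K:\gamma(t)} \neq 0$ along the whole segment — but do not close it. Your proposed repair via the diamond identity \eqref{eq:Mdiamond} does not go through: at the first time $t_0$ where some $\pr{K:\gamma(t_0)}$ vanishes (with $K$ minimal), plugging into the identity with a diamond $(ij \mid K\setminus ij)$ gives a nonnegative left-hand side, so you need $\pr{(K\setminus j):\gamma(t_0)}\cdot \pr{(K\setminus i):\gamma(t_0)} \ge 0$ to be a contradiction; but for completely reducible $s$ the product of their signs is $s(i)\,s(j)$, which is $\spos$ whenever $s(i) = s(j)$. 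In particular, if $s|_K$ is entirely positive there is no $i,j$ for which this yields a contradiction, so the induction stalls in exactly the simplest (PSD) sub-case. Your alternative ideas (scaling by $P_N$ and then shrinking the off-diagonal by a factor) face the same difficulty: you still need an independent argument that shrinking off-diagonal entries of a principally regular matrix towards zero cannot make a principal minor vanish, which is essentially the statement you are trying to prove.

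The missing idea in the paper is short and bypasses all of this: use multilinearity of the determinant in rows. Since $D$ is diagonal, expanding $\det\gamma(t)_K$ over choices of row from $tD$ or $(1-t)\Sigma$ collapses to
\[
  \pr{K:\gamma(t)} = \sum_{I \subseteq K} (1-t)^{|I|}\,\pr{I:\Sigma}\cdot t^{|K\setminus I|}\,\pr{K\setminus I:D},
\]
because a row of $tD$ forces the same column to be picked. Complete reducibility gives $s(I)\cdot s(K\setminus I) = s(K)$ for every $I\subseteq K$, so every term has sign $s(K)$ (with positive coefficients $(1-t)^{|I|}t^{|K\setminus I|}$), and hence so does the whole sum for all $t\in[0,1]$. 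No induction, no diamonds, no normalization of the diagonal is needed. This is the step you should replace your sketch with.
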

\begin{proof}
  If $s$ is completely reducible, then it is representable by the diagonal matrix
  $D$ with diagonal entries $\pm1$ as dictated by the signs $s(i)$, for $i \in N$.
  For the converse it suffices to observe that the sign pattern of any diagonal
  matrix is completely reducible.

  Take any \(\Sigma\) in \(\PR_N(s)\). We will show that the line segment
  \(\gamma(t) = tD + (1-t)\Sigma\) with \(t \in [0,1]\) is entirely contained
  in \(\PR_N(s)\).
  By using the multilinearity of the determinant for each row of $\gamma(t)$,
  one gets the following formula, in which it is crucial that \(D\) is a
  diagonal matrix:
  \[
    \det \gamma(t) = \sum_{I \subseteq N} (1-t)^{|I|} \pr{I:\Sigma} \cdot
      t^{|N \setminus I|} \pr{N \setminus I:D}.
  \]
  But both \(\Sigma\) and \(D\) agree with the completely reducible sign
  pattern \(s\), and so the sign of \(\pr{I:\Sigma}\cdot \pr{N \setminus I:D}\)
  is \(s(N)\) for all \(I\). Furthermore, for all \(t \in (0,1)\), the
  product $(1-t)^{|I|} \cdot t^{|N\setminus I|}$ is positive. Hence, every
  term in the above sum has constant sign~$s(N)$ which is thus the sign of
  the sum.
  The same argument works for any principal minor of $\gamma(t)$ showing
  that $\gamma(t) \in \PR_N(s)$.
\end{proof}

\begin{remark}
Due to \Cref{lemma:PositiveHypGrp} there is exactly one completely reducible
sign pattern up to the hyperoctahedral symmetry which we may take to have
all signs positive. Hence by \Cref{thm:ActionsTop} the representation spaces
of all completely reducible sign patterns are homeomorphic to the convex cone
$\PD_N$ of positive definite matrices and thus contractible. The remarkable
aspect of \Cref{thm:StarShape} is that all these spaces are even star-shaped.
\end{remark}

\section{Further remarks}
\label{sec:Remarks}

In this section we comment on the complexity of deciding representability
as well as two natural variants of our setup. The first one restricts
principal regularity and sign patterns to only \emph{leading principal minors}.
In this case, topology and representability turn out to be trivial.
Afterwards, we close with some remarks about the generalization to
vanishing principal minors.

\subsection{Complexity of representability testing}

Whether or not a given sign pattern is representable by a principally
regular matrix is a decision problem which, by definition, reduces to
the existence of a solution to a system of strict polynomial inequalities
with integer coefficients. The latter is polynomial-time equivalent to
a well-known decision problem in computational geometry called \TT{ETR}
(which stands for \emph{existential theory of the reals}). The decision
problems which many-one reduce to \TT{ETR} in polynomial time comprise
the complexity class $\exists\BB R$; cf.~\cite{SchaeferETR}.

\begin{definition}
A \emph{partial sign pattern} consists of a finite ground set $N$ together
with a list of pairs $(K_i, s_i)$ where $K_i \subseteq N$ and $s_i \in
\Set{\TT+,\TT-}$.
The representability problem \TT{RepPR} asks to decide for a given partial
sign pattern (in the obvious encoding) whether there exists $\Sigma \in
\PR_N$ such that $\sgn \pr{K_i:\Sigma} = s_i$ for all~$i$.
\end{definition}

\begin{proposition}
\TT{RepPR} reduces to \TT{ETR} in polynomial time, and hence $\TT{RepPR} \in \exists\BB R$.
\end{proposition}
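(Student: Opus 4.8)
The plan is to exhibit a polynomial-time many-one reduction from \TT{RepPR} to \TT{ETR}; since \(\exists\BB R\) is by definition the closure of \TT{ETR} under such reductions, membership follows immediately. Given an instance of \TT{RepPR} — a ground set \(N\) with \(|N| = n\) and a list of constraints \((K_i, s_i)\) for \(i = 1, \dots, m\) — the reduction produces an \TT{ETR} formula with \(\binom{n+1}{2}\) real variables, one for each entry \(\sigma_{kl}\) with \(k \le l\), encoding a symmetric matrix \(\Sigma\).

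First I would write down the formula. For every subset \(K \subseteq N\), the principal minor \(\pr{K:\Sigma}\) is a polynomial in the variables \(\sigma_{kl}\); the defining conjunction of the formula is
\[
  \bigwedge_{K \subseteq N} \bigl(\pr{K:\Sigma} \ne 0\bigr) \;\wedge\; \bigwedge_{i=1}^{m} \bigl(s_i \cdot \pr{K_i:\Sigma} > 0\bigr),
\]
where \(s_i \cdot \pr{K_i:\Sigma} > 0\) is shorthand for \(\pr{K_i:\Sigma} > 0\) if \(s_i = \spos\) and \(\pr{K_i:\Sigma} < 0\) if \(s_i = \sneg\). A real point satisfying this formula is exactly a matrix \(\Sigma \in \PR_N\) with \(\sgn \pr{K_i:\Sigma} = s_i\) for all \(i\), so the formula is satisfiable over \(\BB R\) if and only if the \TT{RepPR} instance is a yes-instance. (Strictly, the first conjunction is redundant once one notes that the constraints already force the listed minors nonzero, but including the explicit principal-regularity conditions makes the correspondence with \(\PR_N\) transparent and costs nothing asymptotically.)

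The one point that needs care — and the only real obstacle — is the size of the encoding, since a determinant of an \(n \times n\) symbolic matrix has up to \(n!\) monomials when written out, which is not polynomial in the input length. The standard fix is to not expand the determinants: introduce auxiliary variables and express each \(\pr{K:\Sigma} \ne 0\) or \(s_i \cdot \pr{K_i:\Sigma} > 0\) using a polynomial-size straight-line program or, equivalently, a system of quadratic equations simulating Gaussian elimination (or cofactor recursion) with an extra variable for the value of the minor. \TT{ETR} is closed under existential quantification over such auxiliary variables, and each determinant admits a representation of size polynomial in \(n\); there are \(2^n\) principal minors, but — and here is the essential observation — the number of \emph{constraints} in the output formula that actually need the determinant machinery is \(m + 2^n\), which, because \(n\) appears in unary as part of the ground set \(N\) in any reasonable encoding and \(m\) is part of the input, is polynomial in the input size. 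Thus the whole formula, with auxiliary variables and their defining equations, has size polynomial in the length of the \TT{RepPR} instance, and it is clearly computable in polynomial time. This completes the reduction and hence the proof that \(\TT{RepPR} \in \exists\BB R\).
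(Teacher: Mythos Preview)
Your reduction has a genuine size blow-up. The input to \TT{RepPR} is a ground set $N$ of size $n$ (encoded in unary, so contributing $\Theta(n)$ bits) together with $m$ pairs $(K_i,s_i)$; in particular the total input length can be as small as $\Theta(n)$ when $m$ is constant. Your output formula, however, contains the conjunction $\bigwedge_{K\subseteq N}\bigl(\pr{K:\Sigma}\neq 0\bigr)$, which has $2^n$ conjuncts. The assertion that $m+2^n$ is polynomial in the input size ``because $n$ appears in unary'' is simply false: $2^n$ is exponential in $n$ regardless of encoding. So the reduction as written is not polynomial-time.

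Your parenthetical escape route --- declaring the first conjunction ``redundant'' --- does not work as stated either. You justify redundancy by saying the sign constraints already force the \emph{listed} minors nonzero, but the first conjunction ranges over \emph{all} $K\subseteq N$, not just the $K_i$, so that observation does not let you drop it. What is actually true, and what the paper uses, is that the set $\{\Sigma\in\Sym_N:\sgn\pr{K_i:\Sigma}=s_i\text{ for all }i\}$ is open in $\Sym_N$, while $\PR_N$ is dense (its complement is a hypersurface); hence this open set meets $\PR_N$ if and only if it is nonempty. That density argument is the missing idea: it lets one write an \TT{ETR} formula with only $m$ sign constraints and no principal-regularity constraints at all, keeping the output polynomial in the input.

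For the record, your treatment of the determinant-size issue via auxiliary variables and a polynomial-size arithmetic circuit is a perfectly valid alternative to the paper's device (which encodes each $\pr{K_i:\Sigma}$ as the product of diagonal entries in a spectral decomposition $\Sigma_{K_i}=V_i^\T D_i V_i$, written out with polynomially many quadratic equations). Either mechanism handles the individual constraints; the defect is solely in the number of constraints you emit.
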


\begin{proof}
We first show how to write out the inequality constraint $s_i \pr{K_i:\Sigma} > 0$
in time polynomial in~$|K_i|$. Note that merely writing out the determinant using
the Leibniz formula will not suffice since it has $|K_i|!$ terms. We use instead
the fact that $\Sigma_{K_i}$ is symmetric. By the spectral theorem, there exists
an orthonormal basis of eigenvectors, which we write into the columns of a matrix~$V_i$,
and a diagonal matrix of eigenvalues $D_i$ such that
\[
  \Sigma_{K_i} = V_i^\T \, D_i \, V_i.
\]
This matrix equation as well as the stipulation that it is the spectral
decomposition of $\Sigma_{K_i}$ can be written down using polynomial
equations which are polynomially sized in the number of elements in~$K_i$.
Then $\pr{K_i:\Sigma}$ is equal to the product of the diagonal elements
in~$D_i$.

Writing out these equations for all given pairs $(K_i, s_i)$ results in a
system of polynomial equations and strict inequalities whose solutions are
symmetric matrices representing the given partial sign pattern. Since this
space of matrices is open and $\PR_N$ is dense in $\Sym_N$, the existence
of a general symmetric matrix solution is equivalent to the existence of
a solution in~$\PR_N$.
\end{proof}

A decision problem is \emph{$\exists\BB R$-complete} if it is in $\exists\BB R$
and every $\exists\BB R$ problem reduces to it in polynomial time. Thus it
occupies the most difficult stratum of problems in $\exists\BB R$. In light
of the growing body of literature on $\exists\BB R$-complete problems (see
the collection \cite{CompendiumETR}), we ask:

\begin{question}
Is \TT{RepPR} $\exists\BB R$-complete?
\end{question}

\subsection{Space of leading principal minors}

Instead of $\PR_N$ requiring all principal minors to be non-zero, one could
also consider the space $\LPR_N$ of real symmetric $N \times N$ matrices
whose \emph{leading} principal minors are non-zero. Their corresponding sign
vectors are functions $\ell: \Set{0, \dots, n} \to \Set{\TT+, \TT-}$ given
by $\ell(k) = \sgn \pr{1\cdots k:\Sigma}$; they all satisfy $\ell(0) = \TT+$.
Given any sign pattern ${s: 2^N \to \Set{\TT+, \TT-}}$, its leading sign pattern
is obtained as $\ell(k) = s(\Set{1, \dots, k})$.
Hence,~each $\PR_N(s)$ is contained in exactly one of the spaces $\LPR_N(\ell)$
which only imposes leading principal~minor~signs.

However, $\LPR_N(\ell)$
also contains matrices with vanishing principal minors, as long as they are
not leading. Hence, the natural $\SymGrp_N$ action on symmetric matrices
does not induce a sensible action on leading principal minor sign patterns.
Moreover, we can show that all $\LPR_N(\ell)$ are non-empty and topologically
trivial.
In short, the reason is that $\LPR_N(\ell)$ is under-constrained as it is
defined by $n$ strict inequalities in $\binom{n+1}{2}$~unknowns. In
particular, by Schur complement, the statement $\sgn \pr{1\cdots k:\Sigma}
= \ell(k)$ can be written as
\begin{align*}
  \label{eq:Schurkk}
  \tag{$*$}
  \ell(k) \cdot p_k = \ell(k) \cdot p_{k-1} \cdot (\sigma_{kk} - h_{k-1}) > 0,
\end{align*}
where $p_k = \pr{1\cdots k:\Sigma}$, and $h_{k-1} = \Sigma_{k,1\cdots(k-1)}\,
\adj(\Sigma_{1\cdots(k-1)})\, \Sigma_{1\cdots(k-1),k}$, the latter of which
is a polynomial in variables $\sigma_{ij}$ with $(i,j) < (k,k)$ in the
lexicographic order from the right.

\begin{proposition} \label{prop:LPR}
Every leading principal minor sign pattern $\ell: \Set{0, \dots, n} \to
\Set{\TT+, \TT-}$ is representable by a matrix $\Sigma \in \PR_N$.
Moreover, its space of representations $\LPR_N(\ell)$ is homeomorphic
to $\PD_N$, the convex cone of positive definite matrices, and
therefore contractible.
\end{proposition}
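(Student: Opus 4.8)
The plan is to build an explicit representing matrix recursively, one row and column at a time, using the Schur-complement reformulation~\eqref{eq:Schurkk}. Write $N = \{1,\dots,n\}$ and suppose inductively that we have chosen the upper-left $(k-1)\times(k-1)$ block $\Sigma_{1\cdots(k-1)}$ so that all of its leading principal minors $p_1,\dots,p_{k-1}$ are nonzero with the prescribed signs $\ell(1),\dots,\ell(k-1)$. To extend to the $k\times k$ block we must choose the new off-diagonal entries $\sigma_{1k},\dots,\sigma_{(k-1)k}$ and the new diagonal entry $\sigma_{kk}$. The quantity $h_{k-1} = \Sigma_{k,1\cdots(k-1)}\,\adj(\Sigma_{1\cdots(k-1)})\,\Sigma_{1\cdots(k-1),k}$ depends only on entries already chosen together with the new off-diagonal column; crucially, once those are fixed, $h_{k-1}$ is a \emph{constant}, and $\sigma_{kk}$ still ranges freely over all of $\BB R$. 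So I would simply set the new off-diagonal entries to $0$ (forcing $h_{k-1}=0$, though any values work) and then pick $\sigma_{kk}$ of the correct sign so that $\ell(k)\cdot p_{k-1}\cdot\sigma_{kk} > 0$; this is possible because $p_{k-1}\neq 0$. This produces a matrix in $\LPR_N(\ell)$, proving representability. For the base case $k=1$ one just picks $\sigma_{11}$ with sign $\ell(1)$.

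For the topological statement I would exhibit an explicit homeomorphism $\LPR_N(\ell) \to \PD_N$. The idea is to use the same triangular/Cholesky-type change of coordinates that underlies~\eqref{eq:Schurkk}: a symmetric matrix with nonzero leading principal minors admits a unique factorization $\Sigma = U^\T T\, U$ where $U$ is unit upper-triangular and $T$ is diagonal, and the signs of the diagonal entries of $T$ are exactly the consecutive sign ratios $\ell(k)\ell(k-1)$, hence are determined by $\ell$. Composing with the coordinatewise map that flips the sign of $t_{kk}$ to $|t_{kk}|$ (a homeomorphism, since each $t_{kk}$ has a fixed sign on $\LPR_N(\ell)$), one lands in the set of matrices $U^\T T' U$ with $T'$ positive diagonal, which is precisely $\PD_N$ via the same factorization applied in reverse. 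Both directions are rational (polynomial) in the entries with nonvanishing denominators on the relevant domain, hence continuous, so this is a homeomorphism. Since $\PD_N$ is a convex cone, it is contractible, and so is $\LPR_N(\ell)$.

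Alternatively, and perhaps more cleanly, one can avoid naming the factorization explicitly and argue by a deformation-retraction / straight-line-homotopy argument directly in the $\sigma_{ij}$ coordinates: reading~\eqref{eq:Schurkk} from the bottom right, the last variable $\sigma_{nn}$ enters only the single inequality $\ell(n)\cdot p_{n-1}\cdot(\sigma_{nn}-h_{n-1})>0$ linearly, so $\LPR_N(\ell)$ is a ``graph over'' $\LPR_{N\setminus n}(\ell|_{\{0,\dots,n-1\}})$ with contractible (open half-line) fibers, and one peels off variables inductively. Either route shows $\LPR_N(\ell)$ is homeomorphic to $\PD_N$ (the half-line fibers matching the diagonal of the Cholesky factor, the remaining coordinates matching the strictly-upper-triangular part).

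The step I expect to require the most care is \emph{verifying that the claimed map is genuinely a homeomorphism onto all of $\PD_N$}, rather than just an injection or a map into it: one must check that the sign-flip composed with the Cholesky coordinates is bijective, that the inverse is continuous (which follows from $p_1,\dots,p_{n-1}$ being nonvanishing on the domain, so all denominators are controlled), and that the image is exactly $\PD_N$ and not some proper open subset. The recursive representability construction is routine once~\eqref{eq:Schurkk} is in hand; the only subtlety there is making sure $h_{k-1}$ really is independent of $\sigma_{kk}$, which is exactly the content of the parenthetical remark after~\eqref{eq:Schurkk} that $h_{k-1}$ only involves entries $\sigma_{ij}$ with $(i,j)<(k,k)$ in the relevant order.
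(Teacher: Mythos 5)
Your argument is correct, and it diverges from the paper's mainly in how the homeomorphism with $\PD_N$ is realized. The representability part is in substance identical to the paper's: your recursive construction with zero off-diagonal entries produces exactly the diagonal $\pm1$ matrix that the paper writes down directly (and identifies as a hyperoctahedral image of the identity). For the homeomorphism, the paper proves the slightly stronger statement that \emph{any two} spaces $\LPR_N(\ell)$ and $\LPR_N(\ell')$ are homeomorphic, via a map that keeps every off-diagonal entry $\sigma_{ij}$ fixed and inductively rewrites only the diagonal entries $\sigma_{kk}$ so that the Schur-complement quantity $\sigma_{kk}-h_{k-1}$ acquires the sign demanded by $\ell'$. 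Your first route instead passes to the $U^\T T\,U$ (Cholesky/$LDL^\T$) coordinates and flips the signs of the diagonal factor $T$ while holding the unit-triangular factor fixed; since changing $T$ while holding $U$ perturbs the off-diagonals of $\Sigma$, this is a genuinely different homeomorphism from the paper's, not a reparametrization of it. Both are valid: the paper's map stays in $\Sym_N$ coordinates throughout and needs no factorization theorem, while yours is arguably more transparent once one recalls $\sgn t_{kk} = \ell(k)\ell(k-1)$ and that $\LPR_N(\text{all positive}) = \PD_N$ by Sylvester's criterion. Your second, ``peeling'' route is closest in spirit to the paper's construction; as phrased it most directly yields contractibility, and to upgrade it to the stated homeomorphism you should make the trivialization explicit (the $k$-th fiber coordinate is $\sigma_{kk}-h_{k-1}/p_{k-1}$, which together with the free off-diagonals matches the $(U,T)$ parametrization of $\PD_N$). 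The continuity concern you flag for the Cholesky route is real but routine: the forward map is rational in $\Sigma$ with denominators $p_1,\dots,p_{n-1}$, nonvanishing on $\LPR_N(\ell)$; the inverse $(U,T)\mapsto U^\T T U$ is polynomial; and surjectivity onto $\PD_N$ holds because every positive definite matrix admits this factorization with positive $T$.
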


\begin{proof}
Let $\ell: \Set{0, \dots, n} \to \Set{\TT+, \TT-}$ be arbitrary. It is easy
to explicitly construct a diagonal matrix with $\pm1$ on the diagonal which
represents the leading principal minor sign sequence~$\ell$. This matrix is
moreover a hyperoctahedral image of the identity matrix.

For the second claim, we construct a homeomorphism between any two $\LPR_N(\ell)$
and $\LPR_N(\ell')$ inductively, mapping $\Sigma$ to $\Sigma'$. In this map,
all off-diagonal entries of $\Sigma$ are preserved; only the diagonal
entries change, i.e., $\sigma'_{ij} = \sigma_{ij}$ for all $i \neq j$.
We use the notation $h_k$ from \eqref{eq:Schurkk} to refer to the polynomial
in $\Sigma$ and $h'_k$ for the respective polynomial in $\Sigma'$.
Setting
\[
  \sigma'_{kk} \defas h'_{k-1} + \ell(k) \cdot \ell(k-1) \cdot \ell'(k) \cdot
    \ell'(k-1) \cdot (\sigma_{kk} - h_{k-1})
\]
turns the assumption that $\sgn p_k = \ell(k)$ into the conclusion that
$\sgn p'_k = \ell'(k)$ via \eqref{eq:Schurkk} and using that, by induction,
$\sgn p'_{k-1} = \ell'(k-1)$. Note that $h_0 = 0$ and $\sigma'_{11} =
\pm\sigma_{11}$ and so, inductively, $h'_{k-1}$, $\sigma'_{kk}$ and hence
the entire $\Sigma'$ are continuous functions of $\Sigma$. By exchanging
the roles of $\ell$ and $\ell'$ in this construction, one constructs the
inverse of this map, proving that it is indeed a homeomorphism.
\end{proof}

\begin{remark}
Viewing the inequality~\eqref{eq:Schurkk} as constraining $\sigma_{kk}$
in terms of $p_{k-1}$ and $h_{k-1}$ which are polynomials in lexicographically
lower entries of $\Sigma$, one can also show that the $\LPR_N(\ell)$ are the
cells of a cylindrical algebraic decomposition of~$\LPR_N$ with respect to
the lexicographic ordering.
\end{remark}

\subsection{Sign patterns with zeros}

A generic symmetric matrix is principally regular in the sense that the
complement $\Sym_N \setminus \PR_N$ is an affine subvariety of $\Sym_N$
of codimension~$1$. As~demonstrated in \Cref{sec:Constructions}, the
assumption that all principal minors are invertible furnishes a well-behaved
combinatorial theory with minors and duality. In applications to algebraic
statistics it has been observed that principal regularity is responsible
for other remarkable combinatorial effects: for example, the \emph{intersection
property} of Gaussian conditional independence can be proved directly
from \Cref{eq:Mdiamond} and the assumption of principal regularity
\cite[Corollary~1]{MatusGaussian}; while for singular Gaussian measures,
the combinatorial theory of conditional independence becomes much more
involved resulting, in particular, in the failure of the intersection
property; compare \cite{LnenickaMatus} and \cite{SimecekGaussian}.

On the other hand, from a matroid theory perspective, principal minor
sign patterns of principally regular matrices correspond to orientations
of only a single Lagrangian matroid --- the uniform one; as defined
in~\cite{B.B.G+2000}. By allowing principal minors to vanish, the resulting
sign patterns with image in $\Set{\TT0, \TT+, \TT-}$ represent more
Lagrangian matroids and the corresponding representation spaces decompose
the entire affine space $\Sym_N$.
We note that in this more general setting, the upper bound on connected
components (\Cref{cor:pr-h0}) remains valid because \cite[Theorem~1]{B.P.R1996}
also accounts for zero constraints on polynomials.

The combinatorial interplay of principal minor signs is more complex
when vanishing is allowed. A coarse analogue of such sign patterns
stratified by the order of principal minors is studied in \cite{sepr}
To illustrate the effect of vanishing principal minors, we quote here
the \TT{NN} Theorem of \cite{sepr}: if $\sgn \pr{K:\Sigma} = \TT0$ for
all sets $K$ with $k \le |K| \le k+1$, then this holds for all $|K| \ge k$.
It already seems interesting to determine the representable \emph{PSD}
sign patterns, i.e.,~those $s: 2^N \to \Set{\TT0, \TT+}$ which are
representable by positive semidefinite matrices. These sign patterns
give a natural stratification of the PSD cone which is relevant in
the algebraic approach to semidefinite programming \cite{AlgebraicSDP}.

\subsection*{Acknowledgements}

Tobias Boege was partially supported by the Academy of Finland grant number
323416 and by the Wallenberg Autonomous Systems and Software Program (WASP)
funded by the Knut and Alice Wallenberg Foundation. Jesse Selover was partially
supported by NSF grant DMS-2154019.
The authors are grateful for the hospitality of the Max-Planck Institute for
Mathematics in the Sciences where this project was initiated. We~would like
to thank Bernd Sturmfels for helpful suggestions.
Part of this research was performed while some of the authors were visiting
the Institute for Mathematical and Statistical Innovation (IMSI), which is
supported by the National Science Foundation (Grant No.\ DMS-1929348).
We thank Andrey Zabolotskiy for suggesting references towards \Cref{thm:Asymptotic}
which appeared as a conjecture in an earlier version of this paper, and
Claudia Yun for helpful discussions in finding a proof.

\bibliographystyle{tboege}
\bibliography{main.bib}

\let\etalchar\undefined
\nocitesoft{Mathematica, GANAK, TodaSAT, SCIP, sagemath}
\bibliographystylesoft{tboege}
\bibliographysoft{main.bib}

\enlargethispage{3em}

\end{document}